\documentclass[11pt]{article}
\usepackage{amsmath, amssymb, amsthm}
\usepackage{bbm}
\usepackage{enumitem}
\usepackage{hyperref}
\usepackage{tikz}
\usetikzlibrary{calc}
\usepackage{xcolor}
\usepackage{multirow}
\usepackage{comment}

\newtheorem{theorem}{Theorem}[section]
\newtheorem{lemma}{Lemma}[section]

\newtheorem{proposition}{Proposition}[section]
\newtheorem{remark}{Remark}

\newcommand{\N}{\mathbb{N}}
\newcommand{\R}{\mathbb{R}}

\usepackage[left=1in, right=1in, bottom = 1in, top = 1in]{geometry}
\title{On the structure of the sandpile identity element on Sierpi\'nski gasket graphs}
\author{Robin Kaiser, Ecaterina Sava-Huss, Julia Überbacher}
\date{\today}

\begin{document}
\maketitle
\begin{abstract}
We consider the identity of the abelian sandpile group of finite approximation graphs of the Sierpi\'nski gasket, and we show that the second-order term in the scaling limit converges to the path distance to the nearest corner on the Sierpi\'nski gasket. The proof relies on a decomposition of the identity of the sandpile group into the sum of a constant function and the Laplacian of the graph distance on the approximating graphs.
\end{abstract}
{\it Keywords:} Abelian sandpiles, sandpile group, identity sandpile, Sierpi\'nski gasket, Green's function, scaling limit, second-order limit.

{\it 2020 Mathematics Subject Classification.} 31E05, 60J10, 60J45, 05C81. 

\section{Introduction}

Bak, Tang and Wiesenfeld \cite{BTW87, BTW88} introduced the sandpile automaton on lattices as a system that displays self-organized criticality: it evolves into a critical state without the fine-tuning of external parameters. Dhar \cite{D90} generalized this model to arbitrary finite graphs and noticed that the dynamics of the model are abelian, which led him to refer to it as the abelian sandpile model. Because of its relevance in physics and its interesting connections to many different fields in mathematics, the abelian sandpile model has since attracted the interest of physicists and mathematicians alike.

For a finite graph $G=(V \cup \{s\}, E)$ with a designated sink vertex $s$, a sandpile configuration is a collection of indistinguishable particles on the vertices of $G$. We call a vertex stable, if the number of particles is smaller than its degree, and unstable otherwise. The unstable vertices can be legally toppled by sending one particle to each neighboring vertex. During a toppling, if particles are sent to the sink, they are lost forever. By using these toppling dynamics, one can define the abelian sandpile Markov chain on the space of stable sandpile configurations in the following manner. At each time step, a particle is added at a uniformly random location, and subsequently, the configuration is stabilized by performing all possible legal topplings.

Research on the abelian sandpile model spans topics ranging from the limit shape in the single-source model — where all particles are initially placed on a single vertex and the stabilized configuration is analyzed \cite{PS13,LPS16,FLP10} — through the dynamics of the sandpile Markov chain, including its mixing behaviour and stationary distribution \cite{JLP19,HJL19,JL07}, to the infinite-volume limit of the stationary distribution and the size of avalanches at criticality \cite{JW14,AJ04,BHJ17,ENP23}.

The set of recurrent configurations for the sandpile Markov chain forms an abelian group — also known as the sandpile group or critical group — which is the focus of this note. In particular, we study the identity element of the sandpile group (shortly \emph{sandpile identity}) on Sierpiński gasket approximation graphs. The sandpile identity  has been examined on various graphs; for example, \cite{LBR02} analyzes it on rectangular grids under certain size constraints, and \cite{M22} investigates the identity on elliptical subsets of the square lattice. Notably, the conjecture of Dhar et al. \cite{DRSV95} concerning a central square of vertices with two particles each in the identity on square boxes in $\mathbb{Z}^2$ remains open.

On the Sierpiński gasket, recent work has characterized and analyzed the identity element of the sandpile group. In particular, \cite{CKF20} describes its structure on Sierpiński gasket graphs under various sinked boundary conditions, where one or more corner vertices are the designated sink vertices and \cite{KSHW24} gives a recursive characterization of the identity on the approximating graphs for normal boundary conditions, where each of the three corner vertices is connected by two edges to the sink vertex. In the current paper, we focus on the scaling limit of the sandpile identity on the approximation graphs of the Sierpiński gasket, previously studied in \cite{KSH26}, where it is shown that the piecewise-constant extensions of the identities converge in the weak* sense to a constant function on the gasket. However, the structure of the identity is lost under the weak* topology. Here, we propose a different notion of limit by convolving the  sandpile identity with the stopped Green's function. This approach retains the identity’s structure and addresses a question raised  in \cite{KSH26}. We now present our main result. Formal definitions of the objects in the statement are deferred to  Section \ref{sec:prelim}.

\begin{theorem}\label{thm:main}
    Let $\mathsf{id}_n$ be the identity of the sandpile group of the $n$-th approximation graph $SG_n$ of the Sierpi\'nski gasket $SG$, for $n\in \mathbb{N}$. Denote by $g_n$ the Green function of the simple random walk on $SG_n$ stopped at the corners, and let $G$ be the Green function with Dirichlet boundary conditions on the corner vertices on the Sierpi\'nski gasket $SG$.
\begin{enumerate}[label=(I\arabic*)]
\item The function $g_n*\mathsf{id}_n$ grows asymptotically as $5^n$, and for all $x\in SG$ we have
        \begin{align*}
            \lim_{n\rightarrow\infty}\frac{1}{5^n}(g_n*\mathsf{id}_n)^{\blacktriangle}(x)=4\int_{SG}G(x,y)d\mu(y).
        \end{align*} \label{I1}

\item The second order term of $g_n*\mathsf{id}_n$ is given by the graph distance to the corners of $SG_n$
        $$\lim_{n\rightarrow\infty}\frac{1}{2^n}\biggl(g_n*\mathsf{id}_n-\frac{8}{3}\sum_{y\in V_{SG_n}}g_n(y,\cdot)\biggr)^{\blacktriangle}(x)=-\frac{1}{3}\mathsf{d}(x),$$
        where $\mathsf{d}(x)$ is the shortest path distance from $x$ to the three corners in $SG$.\label{I2} 

\item For all post-critically finite points $x\in V_*$ it holds that
        $$\lim_{n\rightarrow\infty}\frac{1}{2^n}\biggl(g_n*\mathsf{id}_n(x)-4\cdot5^n\int_{SG}G(x,y)d\mu(y)\biggr)=-\frac{1}{3}\mathsf{d}(x).$$\label{I3}
\end{enumerate}
\end{theorem}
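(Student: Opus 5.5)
The plan follows the decomposition announced in the abstract. I will show that on $SG_n$ (with normal boundary conditions, corners joined to the sink by two edges each) the identity admits the representation
\begin{align*}
\mathsf{id}_n = \frac{8}{3}\mathbbm{1}_{V_{SG_n}} + \Delta_n \Big(\frac{1}{3}\mathsf{d}_n\Big) + \Delta_n h_n ,
\end{align*}
where $\mathsf{d}_n$ is the graph distance to the nearest corner on $SG_n$, $\Delta_n$ is the graph Laplacian with the sink boundary condition, and $h_n$ is a correction that is uniformly bounded in $n$, ideally zero. The constants are read off from the degrees. Interior vertices have degree $4$, and $\Delta_n\mathsf{d}_n$ takes only a few values depending on the local geometry: generic vertices, vertices on the cut loci where two corners are equidistant, and corners. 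The constant $8/3$ is fixed by requiring that averaging be compatible with the total mass $4$ per vertex in \ref{I1}. To prove the representation, I will use the recursive characterization of $\mathsf{id}_n$ from \cite{KSHW24}. I will check that the right-hand side is a recurrent stable configuration, using the burning algorithm. I will also check that it lies in the class of $0$ modulo the image of the reduced Laplacian; this holds because it differs from $\frac{8}{3}\mathbbm{1}$ plus a Laplacian, and identity-class membership is verified via the recursion. Since the identity is the unique recurrent configuration in the trivial class, the representation follows.

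Convolving with $g_n$ inverts the Laplacian, since $g_n * \Delta_n f = -f$ up to sign conventions and boundary values at the corners, where $\mathsf{d}_n=0$. This gives
\begin{align*}
g_n*\mathsf{id}_n = \frac{8}{3}\sum_{y} g_n(y,\cdot) - \frac{1}{3}\mathsf{d}_n - h_n .
\end{align*}
Dividing by $2^n$, and using that $2^{-n}\mathsf{d}_n$ extended piecewise converges uniformly to the geodesic distance $\mathsf{d}$ on $SG$ while $h_n/2^n\to 0$, yields \ref{I2}.

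For \ref{I1}, I will use the standard renormalization on $SG$. The resistance scales by $5/3$ per level and the measure of an $n$-cell is $3^{-n}$, so $g_n(x,y)\approx (5/3)^n\,\cdot$(Green function of the Dirichlet form) with the appropriate degree weighting. Consequently $\sum_y g_n(y,x)\cdot 3^{-n}$ converges, after the factor $5^n$, to $c\int G(x,y)\,d\mu(y)$ via Riemann sums of the continuous kernel $G$. The constant $c=\frac{3}{2}$ combined with $\frac{8}{3}$ gives $4$. The second-order terms are $O(2^n)=o(5^n)$ and do not affect the limit.

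For \ref{I3}, at a point $x\in V_*$, the quantity $\sum_y g_n(y,x)$ is the expected exit time of the walk from $x$. On $V_*$ this equals $\frac{3}{2}5^n\int G(x,y)\,d\mu(y)$ exactly, or up to an error $o(2^n)$, because harmonic splines on $SG$ restricted to $V_m$ are exactly computable by the discrete decimation. I will verify this by checking that the discrete expected exit time solves $\Delta_n u=-1$ and comparing with the continuum solution of $\Delta u=-1$ sampled on $V_n$. The difference is the error of approximating $\mu$ by uniform vertex masses, which I will control through the self-similar recursion. Combining this with \ref{I2} then gives \ref{I3}.

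The main obstacle is the first step: proving the exact decomposition of $\mathsf{id}_n$, in particular handling $\Delta_n\mathsf{d}_n$ along the cut loci and at cell junction points, and matching it with the recursion of \cite{KSHW24}. I will first compute small $n$ explicitly to guess $h_n$, and then prove the decomposition by induction on $n$ using the three-fold self-similar gluing. A secondary obstacle is showing the error in \ref{I3} is $o(2^n)$ rather than merely $o(5^n)$.
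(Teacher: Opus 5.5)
\textbf{The gap: the decomposition.} The decomposition carries all of \ref{I2}, and your plan leaves it open. You keep an undetermined correction $h_n$. You then propose to identify $\frac83\mathbbm{1}+\Delta_n(\frac13 d_n)$ (your $\mathsf{d}_n$) with $\mathsf{id}_n$ via the burning algorithm plus trivial-class membership, ``because it differs from $\frac83\mathbbm{1}$ plus a Laplacian''.

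\begin{itemize}
\item That step is vacuous. The reduced Laplacian is invertible over $\mathbb{R}$, so every real vector is a Laplacian. The trivial class is defined by an \emph{integer} preimage under the sink-reduced Laplacian, corner rows included.
\item The detour is also unnecessary. \cite{KSHW24} gives $\mathsf{id}_n$ explicitly, so only a pointwise identity has to be checked.
\item Your local picture (generic vertices, cut loci, corners) is not what happens. At a vertex with $\Delta_n d_n=0$ the formula would place $8/3$ chips.
\item The constant cannot be fixed by the $4$ in \ref{I1}. That is circular, since $4=\frac83\cdot\frac32$ is derived from it.
\end{itemize}

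The missing idea is this. With $\Delta_n f(x)=\sum_{y\sim x}(f(x)-f(y))$, one has $\Delta_n d_n(x)\in\{-1,2\}$ at \emph{every} non-corner vertex. It equals $-1$ exactly where $\mathsf{id}_n=3$ and $2$ exactly where $\mathsf{id}_n=2$. Solving $a-b=3$, $a+2b=2$ is what produces $\frac83$ and $-\frac13$.

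The paper proves this identity by induction along the recursion. In the top cell of $SG_{n+1}$, $d_{n+1}$ is the distance to $u_2$ on one subcell, and $2^{n-1}$ plus the distance to $c_2'$ or $c_2''$ on the other two. So away from the junctions the claim is inherited from level $n$. At the junctions one computes $\Delta d_{n+1}=-1$ at $c_2',c_2''$, and $\Delta d_{n+1}=2$ at $m$ and at $p_1,p_2,p_3$, matching the chip values there. Then $\frac83\sum_y g_n(\cdot,y)-\frac13 d_n-g_n*\mathsf{id}_n$ is harmonic off the corners and zero on them, hence identically zero. So the correction vanishes exactly; without this computation nothing in your plan controls $h_n$.

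\textbf{Your route to \ref{I1} and \ref{I3} is different and sound.} The paper uses the mean-value property and resistance bounds (Lemmas \ref{lem:mean-value} and \ref{lem:green-bounds}), which only give
\[
\Bigl|3\cdot5^n\int_{SG}G(x,y)\,d\mu(y)-2\sum_y g_n(x,y)\Bigr|\le c_*.
\]
Your comparison of Dirichlet problems gives equality. Take $u=\int_{SG}G(\cdot,y)\,d\mu(y)$ and $x\in V_n\setminus V_0$, and test the weak equation $-\Delta u=1$ against the level-$n$ spline $\Psi^n_x$. Since $\Psi^n_x$ is harmonic on each $n$-cell, $\mathcal{E}(u,\Psi^n_x)=\mathcal{E}_n(u,\Psi^n_x)$, so
\[
\Big(\frac53\Big)^n\sum_{y\sim x}\bigl(u(x)-u(y)\bigr)=\int_{SG}\Psi^n_x\,d\mu=\frac23\,3^{-n}.
\]
Hence $\frac32\,5^n u|_{V_n}$ solves the same discrete Dirichlet problem as $\sum_y g_n(\cdot,y)$ and coincides with it. The ``error of approximating $\mu$ by vertex masses'' you planned to control is therefore zero. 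State this identity rather than an unspecified self-similar recursion. Once the decomposition is in hand, \ref{I3} needs no $o(2^n)$ estimate, and \ref{I1} follows from continuity of $x\mapsto\int_{SG}G(x,y)\,d\mu(y)$.
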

Before proving Theorem \ref{thm:main}, we give an informal and intuitive description of the different scaling factors and objects appearing in the statement:
\begin{itemize}
 \item The factor $5^n$ represents the time acceleration factor required for random walks on $SG_n$ to converge to a continuous Brownian motion on the Sierpinski gasket $SG$.
 \item The constant $8/3$ corresponds to the asymptotic average chip density of the sandpile identity element $\mathsf{id}_n$ as $n \rightarrow \infty$; see Figure \ref{fig:identity_gasket}.
 \item The scaling constant $4$ emerges naturally from the standard convention used to relate the discrete and continuum Laplacians (and their inverse Green's functions) on $SG$.
 \item The measure $\mu$ denotes the standard self-similar probability measure on $SG$.
\end{itemize}
The proof of Theorem \ref{thm:main} relies on decomposing the sandpile identity into the distance to the nearest corner vertex and a function with constant Laplacian in the interior of the gasket. More precisely, after convolving the sandpile identity with the stopped Green function on $SG_n$, we obtain
$g_n * \mathsf{id}_n = \frac{8}{3}\, h_n - \frac{1}{3}\, d_n$,
where $d_n$ denotes the distance to the closest corner, and $h_n$ is the function with Laplacian equal to $1$ at every non-corner vertex of $SG_n$ and value $0$ at the corners. Using results from analysis on fractals, we then deduce the limits stated in Theorem \ref{thm:main}.

\textbf{Outline.} Section \ref{sec:prelim} introduces the abelian sandpile model, the Sierpiński gasket $SG$ and its approximation graphs $SG_n$, and the necessary tools from analysis on fractals used in the proof of Theorem \ref{thm:main}. In Section \ref{sec:decomp}, we establish the key ingredient: a decomposition of the sandpile identity on the approximation graphs into a function with constant Laplacian and the distance to the nearest corner. Finally, in Section \ref{sec:scaling}, we prove Theorem \ref{thm:main}: Proposition \ref{prop:I1} is \ref{I1}, Proposition \ref{prop:I2} is \ref{I2}, and Proposition \ref{prop:I3} is \ref{I3}.

\section{Preliminaries}\label{sec:prelim}

\textbf{Abelian Sandpiles.} For a comprehensive overview on the abelian sandpile model, see the survey \cite{J18}. Let $G = (V \cup \{s\}, E)$ be a finite, connected, undirected graph with a distinguished vertex $s$ (the sink). For $v \in V \cup \{s\}$, let ${\rm deg}_G(v)$ denote its degree in $G$, i.e., the number of its neighbors; when the graph is clear from context, we write ${\rm deg}(v)$. For vertices $v, w \in V$, we write $v \sim w$ if they are adjacent in $G$ (connected by an edge).

A sandpile on $G$ is a function $\sigma: V \to \mathbb{N}$ that assigns to each vertex $v \in V$ the number of particles at $v$. A vertex $v$ is unstable if $\sigma(v) \ge {\rm deg}(v)$; otherwise it is stable. The configuration $\sigma$ is called stable if it is stable at every vertex. For each vertex $v \in V$, define a toppling $T_v$ at $v$ by removing ${\rm deg}(v)$ particles from $v$ and sending one particle along each incident edge. Writing $a_{vw}$ for the number of edges between $v$ and $w$, the toppling redistributes particles as follows:
$$
(T_v \sigma)(u) =
\begin{cases}
\sigma(u) - {\rm deg}(v), & u = v, \\
\sigma(u) + a_{vu}, & u \sim v, \\
\sigma(u), & \text{otherwise}.
\end{cases}
$$
A toppling is legal if it results in a configuration with all nonnegative entries, i.e.~if the toppled vertex $v$ is unstable in $\sigma$. Any particles sent to the sink are permanently removed from the system. We view $\sigma$ as a vector indexed by $V$, with each entry recording the number of particles at the corresponding vertex. With this identification, toppling operations can be expressed using the graph Laplacian of $G$. The Laplacian $\Delta \in {\rm Mat}_{|V \cup \{s\}| \times |V \cup \{s\}|}(\mathbb{Z})$ is the matrix with entries
$$
\Delta_{vw} =
\begin{cases}
- a_{vw}, & v \ne w,\ v \sim w, \\
{\rm deg}(v), & v = w, \\
0, & \text{otherwise}.
\end{cases}
$$
Let $\Delta'$ denote the reduced Laplacian of $G$ with respect to $s$, obtained by restricting $\Delta$ to $V \times V$ (i.e., deleting the row and column corresponding to the sink). Then a toppling at $v$ is given by
$$
T_v \sigma = \sigma - \Delta' \delta_v,
$$
where $\delta_v$ is the vector with a $1$ at index $v$ and $0$ elsewhere. This formulation immediately shows that topplings commute (the model is abelian):
$$
T_v T_w \sigma = T_w T_v \sigma \quad \text{for all } v,w \in V.
$$
The stabilization $\sigma^\circ$ of a sandpile $\sigma$ is the unique stable configuration obtained by performing all legal topplings. This is well defined: there exists a sequence of vertices $v_1,\dots,v_n$ — unique up to permutation — such that all topplings are legal and the resulting configuration $\sigma^\circ = T_{v_n}\cdots T_{v_1}\sigma$ is stable. See \cite{J18} for details.

\textbf{Sandpile Markov chain and the sandpile group.} Given sandpiles and stabilization, one defines a Markov chain on the set of stable configurations of $G$, known as the abelian sandpile Markov chain, as follows.
Let $(X_i)_{i \in \mathbb{N}}$ be i.i.d. uniformly distributed on $V$, and start from any stable configuration $\sigma_0$. Define the Markov chain $(\sigma_n)_{n \in \mathbb{N}}$ by
$$
\sigma_{n+1} = (\sigma_n + \delta_{X_n})^\circ,
$$
i.e., at time-step $n$, given $\sigma_n$, we add one particle to a uniformly chosen vertex in $G$ and then we stabilize $\sigma_n + \delta_{X_n}$. The chain $(\sigma_n)_{n \in \mathbb{N}}$ is called \emph{the abelian sandpile Markov chain}, and its set of recurrent states forms an abelian group denoted $\mathcal{R}_G$, where the group operation
$\oplus: \mathcal{R}_G \times \mathcal{R}_G \rightarrow \mathcal{R}_G$ is the pointwise addition followed by stabilization  $\sigma \oplus \omega = (\sigma + \omega)^\circ$. Since $\mathcal{R}_G$ is an abelian group, it has a neutral element under $\oplus$: a recurrent configuration $\mathsf{id}_G$, called the sandpile identity of $G$.

\begin{figure}
    \centering
    \begin{tikzpicture}[scale=2.5]
        \coordinate (A) at (0,0);
        \coordinate (B) at (1,0);
        \coordinate (C) at (0.5,{sqrt(3)/2});
        
        \draw (A)--(B)--(C)--cycle;
        \foreach \p in {A,B,C}{
          \draw[fill=white] (\p) circle (0.05);
        }
                
    \begin{scope}[shift={(1.6,0)}]
        \coordinate (A) at (0,0);
        \coordinate (B) at (1,0);
        \coordinate (C) at (0.5,{sqrt(3)/2});
        \coordinate (AB) at ($ (A)!0.5!(B) $);
        \coordinate (BC) at ($ (B)!0.5!(C) $);
        \coordinate (CA) at ($ (C)!0.5!(A) $);
        
        \draw (A) -- (B) -- (C) -- cycle;
        \draw (AB) -- (BC) -- (CA) -- cycle;
        
        \foreach \p in {A,B,C,AB,BC,CA}{
          \draw[fill=white] (\p) circle (0.05);
        }
    \end{scope}

    \begin{scope}[shift={(3.2,0)}]
        \coordinate (A) at (0,0);
        \coordinate (B) at (1,0);
        \coordinate (C) at (0.5,{sqrt(3)/2});
        \coordinate (D) at ($ (A)!0.5!(B) $);
        \coordinate (E) at ($ (B)!0.5!(C) $);
        \coordinate (F) at ($ (C)!0.5!(A) $);

        \coordinate (AD) at ($ (A)!0.5!(D) $);
        \coordinate (AE) at ($ (A)!0.5!(E) $);
        \coordinate (AF) at ($ (A)!0.5!(F) $);

        \coordinate (BD) at ($ (B)!0.5!(D) $);
        \coordinate (BE) at ($ (B)!0.5!(E) $);
        \coordinate (BF) at ($ (B)!0.5!(F) $);

        \coordinate (CD) at ($ (C)!0.5!(D) $);
        \coordinate (CE) at ($ (C)!0.5!(E) $);
        \coordinate (CF) at ($ (C)!0.5!(F) $);

        \draw (A) -- (B) -- (C) -- cycle;
        \draw (D) -- (E) -- (F) -- cycle;
        \draw (AD) -- (AE) -- (AF) -- cycle;
        \draw (BD) -- (BE) -- (BF) -- cycle;
        \draw (CD) -- (CE) -- (CF) -- cycle;
        
        \foreach \p in {A,B,C,D,E,F, AD, AE, AF, BD, BE, BF, CD, CE, CF}{
          \draw[fill=white] (\p) circle (0.05);
        }
    \end{scope}
    \end{tikzpicture}
    \caption{The first three Sierpi\'nski gasket approximation graphs $SG_0$, $SG_1$ and $SG_2$.}
    \label{fig:sg_iterations}
\end{figure}
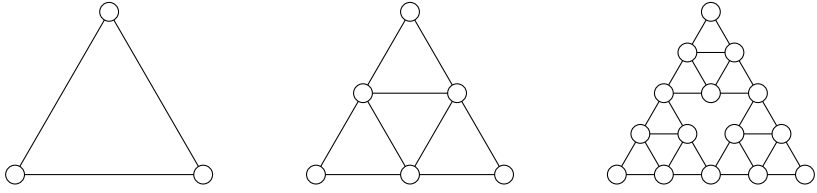

\textbf{The Sierpi\'nski gasket.} We define Sierpi\'nski gasket approximation graphs $SG_n$, for $n\in \N$ as in \cite{BP88}. For $x \in \R^2$ and a graph $G=(V,E)$ that can be embedded in $\R^2$, we denote by $x+G$ the graph \[x+G = (\{x+v : v \in V\}, \, \{(x+a, x+b) : (a,b) \in E\}).\]
For $a\in\R$, we write $aG$ for the graph
$$aG:=(\{av:v\in V\},\{(av,aw):(v,w)\in E\}).$$
Let $u_1=(0,0)$, $u_2 = \frac{1}{2} (1, \sqrt{3})$ and $u_3=(1,0)$. We define $SG_0=(V_{SG_0}, \, E_{SG_0})$ to be the complete graph with vertex and edge set given by
 \[
 V_{SG_0}=\{u_1, u_2, u_3\} \quad \text{and} \quad  
 E_{SG_0}=\{\{u_1,u_2\},\{u_2,u_3\},\{u_1,u_3\}\}\,.
 \]
Recursively, for $n \geq 1$ the $n$-th Sierpi\'nski gasket approximation graph $SG_n=(V_{SG_n}, E_{SG_n})$ is 
\[SG_n = \frac{1}{2} \bigl(SG_{n-1} \cup (u_2 +SG_{n-1}) \cup (u_3 + SG_{n-1})\bigr).\] 
Throughout this paper, we will also use the less cumbersome notation $V_n$ for the set of vertices in the $n$-th iteration. Notice that $SG_n$ is obtained from $SG_{n-1}$ by copying $SG_{n-1}$ three times and shifting one copy to the right and one copy diagonally upwards, and then rescaling the union of the three copies. Figure \ref{fig:sg_iterations} displays the first three Sierpi\'nski gasket graphs $SG_0$, $SG_1$ and $SG_2$.
Define the union of the vertices of all the approximation graphs
\[V_\ast = \bigcup_{n \in \N} V_{n}.\]
\textit{The Sierpi\'nski gasket} is defined as the closure of $V_\ast$ in $\R^2$.

\subsection{Potential Theory on the Sierpi\'nski Gasket}

This subsection is devoted to introducing the necessary tools and framework from analysis on fractals - specifically analysis on the Sierpi\'nski gasket $SG$ - needed to prove Theorem \ref{thm:main}. For a comprehensive overview, see \cite{K01} and \cite{S06}.

\textbf{Standard self-similar measure.} For each $i \in \{1,2,3\}$, define $\psi_i: \mathbb{R}^2 \to \mathbb{R}^2$ by
$$\psi_i(x) = \tfrac{1}{2}(x - u_i) + u_i,$$
where $u_1, u_2, u_3$ are the vertices of the previously defined equilateral triangle. For a word $w=w_1w_2\ldots w_n\in\{1,2,3\}^n$ of length $n\in\N$, we write
$\psi_w=\psi_{w_n}\circ ... \circ \psi_{w_1}$. We take the $\sigma$-algebra of measurable sets on $SG$ to be generated by sets of the form
$\bigl\{\psi_w(SG) : w \in \bigcup_{m \ge 1} \{1,2,3\}^m\bigr\}$.
The standard self-similar measure $\mu$ is the unique measure with $\mu(\psi_w(SG))=3^{-|w|}$,
where $|w|$ is the length of the word $w$. Notice that this standard self-similar measure is up to scalar multiples a $\log_2(3)$-dimensional Hausdorff measure on the Sierpi\'nski gasket with respect to the Euclidean metric. See also Chapter 1.2 in \cite[pp. 5-9]{S06} for more details.

\textbf{Green function on $SG_n$.} Let $(X_t^n)_{t \in \mathbb{N}}$ denote the simple random walk on $SG_n$, i.e., the Markov chain with transition probabilities given by 
\begin{equation*}
p(x,y)=
    \begin{cases}
        \frac{1}{4}, \quad \text{if } x \underset{n}{\sim} y,\, x \neq u_i\\
        \frac{1}{2}, \quad \text{if } x \underset{n}{\sim} y,\, x = u_i
        \end{cases}
\end{equation*}for $i \in \{1,2,3\}$, where $x\underset{n}{\sim}y$ means that $x,y\in V_{SG_n}$ are neighbours in $SG_n$. For $x\in V_{SG_n}$, denote by $\mathbb{E}_x$ the expectation with respect to the probability measure $\mathbb{P}(\cdot|X^n_0=x)$. 
The Green function $g_n(x,y)$ on $SG_n$ is the expected number of visits to $y$ by the walk started at $x$ before it hits one of the three corner vertices $u_1,u_2,u_3$:
$$
g_n(x,y) = \mathbb{E}_x\!\biggl[\sum_{t=0}^{\tau_n-1} \mathbf{1}_{\{X_t^n = y\}}\biggr],
\quad \text{where } \tau_n = \inf\bigl\{t \ge 0 : X_t^n \in \{u_1,u_2,u_3\}\bigr\}.
$$
If we write $\Delta_n$ for the graph Laplacian on $SG_n$, then it holds that
$$\Delta_n g_n(x,\cdot)(y)=\delta_x(y), \quad \text{for all} \quad 
x,y\in SG_n\backslash\{u_1,u_2,u_3\}.$$

\textbf{Piecewise constant continuation to $SG$.} For $x \in SG$, define $x_n \in V_{SG_n}$ as $x_n = {\rm argmin}\{d(x,y): y \in SG_n\}$. For uniqueness, if there are several minimizers, choose the one furthest to the right. For a  function
$f:V_{SG_n} \rightarrow \R$ we define the piecewise constant continuation of $f$ to $SG$ by $f^\blacktriangle : SG \rightarrow \R, f^\blacktriangle(x)= f(x_n)$.

\textbf{Laplacian operator on $SG$.} The Laplacian operator on the Sierpi\'nski gasket $SG$, denoted by $\Delta$, can be defined via a weak formulation using an energy form on $SG$. For $n\in \N$, we define the energy form $\mathcal{E}_n$ on $SG_n$ as
$$\mathcal{E}_n(f,g)=\bigg(\frac{5}{3}\bigg)^n\sum_{x\underset{n}{\sim} y}\bigl(f(x)-f(y)\bigr)\bigl(g(x)-g(y)\bigr),$$
where $f,g:SG\rightarrow\R$. Observe that the $5/3$ factor in the energy functional corresponds to the increase in minimal energy when a function on the $n$-th gasket is extended to the $(n+1)$-st. For details, see \cite[Section 1.3.]{S06}.
Using these finite iteration energy forms $\mathcal{E}_n$ we define an energy form on $SG$ via
$$\mathcal{E}(f,g)=\lim_{n\rightarrow\infty}\mathcal{E}_n(f|_{V_{SG_n}},g|_{V_{SG_n}}).$$
Note that the limit is not necessarily finite. So, we also define the domain of the energy functional as
$$\operatorname{dom}(\mathcal{E}) := \{ f : SG \to \mathbb{R} \mid \mathcal{E}(f,f) < \infty \}.$$
With this energy form, we define a weak Laplacian on $SG$. For $u \in \operatorname{dom}(\mathcal{E})$ and continuous $f: SG \to \mathbb{R}$, we write $u \in \operatorname{dom}(\Delta)$ and $\Delta u = f$ if and only if
$$
\mathcal{E}(u, v) = - \int_{SG} f\, v \, d\mu \quad \text{for all } v \in \operatorname{dom}(\mathcal{E}).
$$
\textbf{Green function on $SG$.} 
For Theorem \ref{thm:main}, we must specify the function on $SG$ that arises as the scaling limit of the discrete Green functions $g_n$ defined above. Specifically, we define the Green function on the Sierpi\'nski gasket $SG$, i.e., the integral kernel of the inverse Laplacian on SG.
We define the Green function subject to Dirichlet boundary conditions on $SG$ via harmonic splines as explained in \cite[Section 2.6.]{S06} by
$$
G(x,y) := \lim_{M \to \infty} \sum_{m=0}^{M} \sum_{z,z' \in V_{SG_{m+1}} \setminus V_{SG_m}} g(z,z')\, \Psi_{z}^{m+1}(x)\, \Psi_{z'}^{m+1}(y),
$$
where
$$
g(z,z') =
\begin{cases}
\dfrac{9}{50}\Big(\dfrac{3}{5}\Big)^m, & z = z' \ \text{and}\ z \in V_{SG_{m+1}} \setminus V_{SG_m},\\[6pt]
\dfrac{3}{50}\Big(\dfrac{3}{5}\Big)^m, & z \ne z',\ z,z' \in V_{SG_{m+1}} \setminus V_{SG_m},\ \text{and}\ z,z' \in \psi_w(SG)\ \text{for some } |w|=m,
\end{cases}
$$
and, for $z \in V_{SG_m}$, the level-$m$ harmonic spline $\Psi_z^m$ is the unique piecewise harmonic function on $SG$ satisfying
$$
\Psi_z^m(x) =
\begin{cases}
1, & x = z,\\
0, & x \in V_{m} \setminus \{z\}.
\end{cases}
$$
A key feature of the Green function on $SG$ is that, up to the scaling factor $(3/5)^n$, it matches the stopped Green function $g_n$ on the vertices of $SG_n$.

\begin{lemma}\label{lem:green-is-same}
For all $n \in \mathbb{N}$ and all $x,y \in V_{n}$ it holds
$\left(\frac{3}{5}\right)^n g_n(x,y) = G(x,y).$
\end{lemma}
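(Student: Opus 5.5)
The first reduction is to note that $G(x,y)$ restricted to $V_n\times V_n$ is a \emph{finite} sum. For $m\ge n$ and $z\in V_{m+1}\setminus V_m$, the spline $\Psi^{m+1}_z$ vanishes on $V_m\supseteq V_n$. So for $x,y\in V_n$ only the terms $m=0,\dots,n-1$ survive:
$$G(x,y)=G^{(n)}(x,y):=\sum_{m=0}^{n-1}\sum_{z,z'\in V_{m+1}\setminus V_m} g(z,z')\,\Psi^{m+1}_z(x)\Psi^{m+1}_{z'}(y).$$
I would then prove $(3/5)^n g_n = G^{(n)}$ on $V_n\times V_n$ by induction on $n$. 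The base case $n=0$ is trivial, since both sides vanish on the corners.

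For the inductive step I would write $\Delta'_{n+1}$, the Laplacian of $SG_{n+1}$ with the three corners removed, in block form with respect to $V_{n+1}=V_n\sqcup(V_{n+1}\setminus V_n)$:
$$\Delta'_{n+1}=\begin{pmatrix}A&B\\ C&D\end{pmatrix}.$$
Three facts drive the computation.
\begin{enumerate}[label=(\roman*)]
\item The block $D$ is block-diagonal over the $3^n$ cells of level $n$. Each new vertex has degree $4$ and is adjacent to the two other new vertices of its cell, so each $3\times 3$ block equals $5I-J$. Its inverse is $\tfrac15\bigl(I+\tfrac12 J\bigr)$, with diagonal entries $3/10$ and off-diagonal entries $1/10$.
\item The Schur complement $S=A-BD^{-1}C$ equals $\tfrac35\Delta'_n$. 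This is the classical $\Delta$–$Y$ renormalization of the gasket, i.e.\ the $5/3$ factor in $\mathcal E_n$. It suffices to check it on a single $2$-cell, because the Schur complement is additive over cells.
\item The matrix $-D^{-1}C$ is exactly the harmonic extension operator from $V_n$ to $V_{n+1}$. Hence, in terms of the splines, $\Psi^n_z|_{V_{n+1}}=$ (column of the extension matrix) for every $z\in V_n$.
\end{enumerate}

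Next I would apply the block-inverse formula:
$$(\Delta'_{n+1})^{-1}=\begin{pmatrix}S^{-1}&-S^{-1}BD^{-1}\\ -D^{-1}CS^{-1}&D^{-1}+D^{-1}CS^{-1}BD^{-1}\end{pmatrix}.$$
This shows that the inverse on $V_{n+1}$ is the harmonic extension, in both variables, of $S^{-1}=\tfrac53(\Delta'_n)^{-1}$, plus the local term $D^{-1}$ supported on pairs of new vertices lying in the same cell. The Green function $g_n$ counts expected visits, so it equals $(\Delta'_n)^{-1}$ up to the constant degree normalization, which is the same at every level. Consequently
$$g_{n+1}|_{V_n\times V_n}=\tfrac53\, g_n.$$
More generally, $(3/5)^{n+1}g_{n+1}$ equals the bi-harmonic extension of $(3/5)^n g_n$ plus $(3/5)^{n+1}$ times the normalized $D^{-1}$ on new vertex pairs.

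To close the induction I would compare this with the spline side. By the induction hypothesis, $G^{(n)}$ is a sum of products of level-$\le n$ splines. Such products are piecewise harmonic at level $n$ in each variable, so $G^{(n)}$ restricted to $V_{n+1}$ is exactly the bi-harmonic extension of $G^{(n)}|_{V_n}$. The new term with $m=n$ contributes $g(z,z')$ on pairs $z,z'\in V_{n+1}\setminus V_n$ in a common cell and nothing else, because $\Psi^{n+1}_z(x)=\delta_{zx}$ on $V_{n+1}$. The induction is therefore reduced to checking that the local block matches, namely
$$\left(\tfrac35\right)^{n+1}\cdot c\cdot\tfrac15\bigl(I+\tfrac12J\bigr)=\left(\tfrac35\right)^n\begin{pmatrix}\tfrac{9}{50}&\tfrac{3}{50}&\tfrac{3}{50}\\ \tfrac{3}{50}&\tfrac{9}{50}&\tfrac{3}{50}\\ \tfrac{3}{50}&\tfrac{3}{50}&\tfrac{9}{50}\end{pmatrix},$$
where $c$ is the normalization constant relating $g_n$ to $(\Delta'_n)^{-1}$. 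The $3:1$ ratio of diagonal to off-diagonal entries agrees on both sides, and $\tfrac35\cdot\tfrac{3}{10}=\tfrac{9}{50}$, so this forces $c=1$.

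I expect this constant bookkeeping to be the main obstacle. With $\Delta'_n$ the unnormalized graph Laplacian, the expected-visit Green function is $4(\Delta'_n)^{-1}$, which would give $c=4$ rather than $c=1$. I would therefore carefully reconcile the normalization. Specifically, I would check whether the paper's identity $\Delta_n g_n(x,\cdot)=\delta_x$ implicitly uses $\Delta_n=I-P$ or the graph Laplacian, and whether the chosen normalization of $G$ corresponds to $\mathcal E$ or to $\tfrac14$ of it. The scaling factor $4$ in Theorem~\ref{thm:main}\ref{I1} suggests that exactly this factor is absorbed elsewhere. The other two ingredients, the Schur complement $\tfrac35\Delta'_n$ and the identification of $-D^{-1}C$ with harmonic extension (the $\tfrac25$–$\tfrac15$ rule), are standard one-cell computations.
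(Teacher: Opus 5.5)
Your argument is correct, and it differs from the paper, which gives no proof of this lemma and only cites Lemma~2.4 of \cite{FHKS24}. Each ingredient checks out:
\begin{itemize}
\item On $V_n$ only the terms $m<n$ of the spline sum survive.
\item $D$ is block diagonal with blocks $5I-J$, whose inverse is $\tfrac15(I+\tfrac12J)$.
\item The Schur complement is additive over $n$-cells and equals $\tfrac35\Delta'_n$ by the one-cell $\Delta$--$Y$ computation.
\item $-D^{-1}C$ is the harmonic extension with zero corner data. This is compatible with the splines, since all $\Psi^{m+1}_z$ with $z\notin V_m$ vanish on $V_0$.
\item The block-inverse formula splits $(\Delta'_{n+1})^{-1}$ into the bi-harmonic extension of $\tfrac53(\Delta'_n)^{-1}$ plus the local block, and the local block is exactly the $m=n$ spline term.
\end{itemize}
In effect you re-derive Strichartz's spline formula for $G$ from the discrete Green functions, using only the $\tfrac15$--$\tfrac25$ rule and the $\tfrac53$ renormalization. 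A shorter route uses tools the paper already imports in Proposition~\ref{prop:effective-res}: $G(x,y)=\tfrac12\bigl(R_{eff}(x,V_0)+R_{eff}(y,V_0)-R_{eff}(x,y)\bigr)$. The same formula holds for $(\Delta'_n)^{-1}$ with graph resistances in $SG_n$ and $V_0$ shorted. Since the trace of $\mathcal E$ on $V_n$ is $(\tfrac53)^n$ times the graph energy, all these resistances scale by $(\tfrac35)^n$. That route is quicker but rests on Kigami's resistance-form theory; yours is elementary and self-contained.

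The constant you worried about is not a gap in your proof but a normalization slip in the paper. Resolve it by fixing $c=1$ rather than looking for the factor elsewhere.
\begin{itemize}
\item The spline kernel $G$ is correctly normalized as the Green function of the standard $\mathcal E$: for a midpoint $x\in V_1$, $G(x,x)=\tfrac{9}{50}=R_{eff}(x,V_0)$.
\item The expected number of visits gives $g_1(x,x)=\tfrac65$, so $\tfrac35 g_1(x,x)=\tfrac{18}{25}=4G(x,x)$. Read literally as expected visits, the lemma is false by a factor $4$.
\item It holds exactly for $g_n=(\Delta'_n)^{-1}$. That is what the paper's own identity $\Delta_n g_n(x,\cdot)=\delta_x$, with the unnormalized graph Laplacian, says.
\item This is also the normalization the rest of the paper uses. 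Proposition~\ref{prop:id-decomp} computes $\Delta_n d_n$ with the unnormalized Laplacian and needs $\Delta_n h_n=1$. With $g_n=4(\Delta'_n)^{-1}$ the decomposition would instead be $I_n=-\tfrac43 d_n+\tfrac83 h_n$.
\end{itemize}
Your guess that the factor $4$ in \ref{I1} absorbs the discrepancy is wrong. That $4$ equals $\tfrac83\cdot\tfrac32$, where $h_n(x)\approx\tfrac32\,5^n\int_{SG}G(x,y)\,d\mu(y)$ comes from Lemma~\ref{lem:green-conv}. That lemma itself uses the present lemma with $c=1$; with $c=4$ the limit in \ref{I1} would be $16\int_{SG}G(x,y)\,d\mu(y)$.
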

For a proof see, for instance, Lemma 2.4 in \cite{FHKS24}.

\subsubsection{Auxiliary results}
In the proofs of the scaling limit in Section \ref{sec:scaling}, we need several technical lemmas. These include the mean-value property of harmonic functions on $SG$ and properties of the Green function $G$ on the $SG$, specifically the variation of the Green function within a given $n$-cell.

\begin{lemma}\label{lem:mean-value}
    Let $h:SG\rightarrow\R$ be a harmonic function on $SG\backslash V_0$. Then for every $m,n\in\N$ with $m>n$ and $x\in V_n\backslash V_0$ it holds that
    $$\frac{1}{\mu(B(x,2^{-m}))}\int_{B(x,2^{-m})}h(y)d\mu(y)=h(x).$$
\end{lemma}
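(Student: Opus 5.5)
The plan is to reduce the statement to two facts: the ball $B(x,2^{-m})$ is, up to a $\mu$-null set, the union of the two $m$-cells that meet at $x$; and the average of a harmonic function over a cell equals the mean of its three boundary values. The claimed identity then becomes exactly the discrete harmonicity of $h$ at $x$ on $SG_m$.

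\textbf{Step 1 (geometry of the ball).} Since $x\in V_n\setminus V_0\subset V_m\setminus V_0$, the point $x$ is a junction point of exactly two $m$-cells $\psi_w(SG)$ and $\psi_{w'}(SG)$ with $|w|=|w'|=m$. Each of these is contained in a triangle of side $2^{-m}$ having $x$ as a vertex, so every one of its points lies within distance $2^{-m}$ of $x$. Conversely, any point of $SG$ outside these two cells lies in another $m$-cell. Such a cell meets the two cells around $x$ only at their far vertices, which are at distance exactly $2^{-m}$ from $x$, so all its points are at distance $\ge 2^{-m}$ from $x$. Hence $B(x,2^{-m})\cap SG$ differs from $\psi_w(SG)\cup\psi_{w'}(SG)$ by at most finitely many points (the cells' far vertices and $x$). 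These have $\mu$-measure zero, since $\mu$ is non-atomic. Consequently $\mu(B(x,2^{-m}))=2\cdot 3^{-m}$. I expect this bookkeeping, including the open-versus-closed ball issue, to be the only delicate point, and the null-set remark handles it.

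\textbf{Step 2 (cell averages).} A function $h$ harmonic on $SG\setminus V_0$ is determined by its values on $V_0$ and is harmonic at every level. By self-similarity of harmonic extension, $h\circ\psi_w$ is again harmonic with boundary values $h(\psi_w(u_i))$. Moreover $\mu(\psi_w(A))=3^{-m}\mu(A)$ for measurable $A\subseteq SG$. Together these give
$$3^m\int_{\psi_w(SG)}h\,d\mu=\int_{SG}h\circ\psi_w\,d\mu.$$
Define the functional $L(a_1,a_2,a_3)=\int_{SG}H\,d\mu$, where $H$ is the harmonic function with boundary values $a_i$ at $u_i$. Then $L$ is linear. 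It is also invariant under permutations of $(a_1,a_2,a_3)$, because the dihedral symmetries of the triangle preserve both $\mu$ and harmonicity. Finally $L(1,1,1)=1$. These three properties force $L(a)=(a_1+a_2+a_3)/3$. Hence the average of $h$ over an $m$-cell equals the mean of $h$ over that cell's three vertices.

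\textbf{Step 3 (conclusion).} Let the cell $\psi_w(SG)$ have vertices $x,a,b$ and the cell $\psi_{w'}(SG)$ have vertices $x,c,d$; then $a,b,c,d$ are exactly the four neighbours of $x$ in $SG_m$. By Steps 1 and 2,
$$\frac{1}{\mu(B(x,2^{-m}))}\int_{B(x,2^{-m})}h\,d\mu=\frac12\Bigl(\frac{h(x)+h(a)+h(b)}{3}+\frac{h(x)+h(c)+h(d)}{3}\Bigr).$$
Since $h$ is harmonic on $SG\setminus V_0$, its restriction to $V_m$ is discretely harmonic at $x$, i.e. $h(a)+h(b)+h(c)+h(d)=4h(x)$. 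Substituting gives $\frac12\cdot\frac{2h(x)+4h(x)}{3}=h(x)$, as claimed.
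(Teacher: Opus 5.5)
The gap is in Step 1; Steps 2 and 3 are correct and follow the paper's route. The paper also treats the ball as two cells. It also uses that the $\mu$-average of a harmonic function over a cell is the mean of its three vertex values, and finishes with discrete harmonicity $h(a)+h(b)+h(c)+h(d)=4h(x)$. The paper gets the cell-average fact from the $1/5$--$2/5$ rule. Your linear, permutation-invariant functional $L$ with $L(1,1,1)=1$ gives it more cleanly, and treating arbitrary $x\in V_m\setminus V_0$ directly replaces the paper's appeal to self-similarity.

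The problem with Step 1 is that the balls are Euclidean (cf.\ the proof of Lemma~\ref{lem:green-conv}). In that metric the inference ``another $m$-cell meets the two cells at $x$ only at far vertices, which are at distance exactly $2^{-m}$, so all its points are at distance $\ge 2^{-m}$'' does not follow. Take $x=p_1=(\tfrac14,\tfrac{\sqrt3}{4})$ and radius $\tfrac12$. The cell $\psi_3(SG)$ meets $\psi_1(SG)\cup\psi_2(SG)$ only at $p_2,p_3$, both at distance $\tfrac12$ from $p_1$. Yet the midpoint $(\tfrac58,\tfrac{\sqrt3}{8})$ of the segment $p_2p_3$ is at distance $\tfrac{\sqrt3}{4}<\tfrac12$, and a subset of $\psi_3(SG)$ of positive $\mu$-measure lies in the ball. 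Your argument uses only $x\in V_m\setminus V_0$ and never $m>n$, so it would equally `prove' this false case $m=n=1$.

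To repair Step 1 you must use $m>n$.
\begin{itemize}
\item Let $k+1\le n$ be the level at which $x$ first appears. Then $x$ is the midpoint of an edge of a unique $k$-cell $D$ and lies in two of its $(k+1)$-subcells.
\item Everything in $SG$ outside these two subcells is far from $x$. This covers the third subcell of $D$, across its central hole, and all other $k$-cells. All of it lies at Euclidean distance at least $\tfrac{\sqrt3}{2}2^{-(k+1)}\ge\tfrac{\sqrt3}{2}2^{-n}>2^{-m}$ from $x$.
\item Each of the two subcells has $x$ as a corner. Inside each, the complement of the corner $m$-subcell is a union of cells. Each such cell's nearest point to $x$ is a vertex on an edge through $x$, at distance $\ge 2^{-m}$, because the cell opens away from $x$ at that vertex.
\end{itemize}
Hence $B(x,2^{-m})\cap SG$ is the union of the two $m$-cells up to finitely many points, and your Steps 2--3 then complete the proof.

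Your touching-point reasoning would be valid for balls in the intrinsic path metric $d_{SG}$, since any path leaving the two cells passes through a far vertex. The paper's own proof also just identifies the ball with the two cells, and it does so at $y_1\in V_{n+1}\setminus V_n$ with radius $2^{-(n+1)}$, where the Euclidean ball is strictly larger. What both arguments really establish is the mean value over the union of two cells; the hypothesis $m>n$ is what turns this into a statement about Euclidean balls.
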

\begin{proof}
The statement follows from the well-known fact \cite[Section 1.3.]{S06} that a harmonic function $h$ on an n-cell of $SG$ is determined by its values at the three corner vertices via the $1/5–2/5$ rule.
More precisely, if $x_{1},x_{2},x_{3} \in V_{n}$ are the corner vertices and$y_{1},y_{2},y_{3}\in V_{n+1}\backslash V_{n}$ are the corresponding cut points (each $y_i$ opposite $x_i$), the rule gives $h=h(x_{1})\psi_{1}+h(x_{2})\psi_{2}+h(x_{3})\psi_{3}$
on this $n$-cell, where $\psi_{i}$ is the unique harmonic spline with $\psi_{i}(x_{j})=\delta_{ij}$, and the algorithm gives
\begin{align}\label{eq:splines}
\psi_{1}(y_{1})=\frac{1}{5},\quad \psi_{1}(y_{2})=\psi_{1}(y_{3})=\frac{2}{5},
\end{align}
The other values of $\psi_{i}(y_{j})$ are found via dihedral symmetry. Consequently $h$ is discrete harmonic:
\begin{align}\label{eq:h-repres}
h(y_{1})=\frac{1}{4}(h(x_{2})+h(x_{3})+h(y_{2})+h(y_{3})).
\end{align}
Moreover, since $\sum_{i=1}^{3}\psi_{i}\equiv1$ and $\mu(m\text{-cell})=3^{-m}$ for every $m\ge n$ we deduce by symmetry that for every $(n+1)$-subcell $\mathcal{T}_{n+1}$ contained in the $n$-cell, $\int_{\mathcal{T}_{n+1}}\psi_{i}d\mu=3^{-(n+2)}$ for $i\in\{1,2,3\}$.
We now establish the mean-value property of $h$ over the ball $B(y_{1},2^{-(n+1)})$:
\begin{align}\label{eq:mean-value}
\frac{1}{\mu(B(y_{1},2^{-(n+1)}))}\int_{B(y_{1},2^{-(n+1)})}h~d\mu=h(y_{1}).
\end{align}
Using (\ref{eq:splines}) and the above integrals of $\psi_{i}$, we get
\begin{align*}
\int_{B(y_{1},2^{-(n+1)})}h~d\mu&=\big(h(y_{1})+h(x_{3})+h(y_{2})\big)\cdot3^{-(n+2)}+\big(h(y_{1})+h(x_{2})+h(y_{3})\big)\cdot3^{-(n+2)} \\
&=\frac{1}{3}\cdot3^{-(n+1)}\cdot\big(h(x_{2})+h(x_{3})+h(y_{2})+h(y_{3})+2h(y_{1})\big) \\
&=\frac{1}{3}\cdot3^{-(n+1)}\cdot6h(y_{1}),
\end{align*}
where we used \eqref{eq:h-repres} in the last equation. Dividing the last expression by $\mu(B(y_{1},2^{-(n+1)}))=2\cdot3^{-(n+1)}$ gives \eqref{eq:mean-value}. In \eqref{eq:mean-value} we apply the self-similar cell structure of $SG$ to deduce the claimed mean-value property. 
\end{proof}
The next result relates the Green function on $SG$ to the effective resistance. The definition of the effective resistance metric is standard, and can for example be found in \cite[Section 1.6.]{S06}.

\begin{proposition}\label{prop:effective-res}
    Let $R_{eff}(A,B)$ be the effective resistance between subsets $A, B\subset SG$, subject to Dirichlet boundary conditions on $V_{0}$. Let $G(x,y)$ be the Green's function on SG subject to Dirichlet boundary conditions on $V_{0}$. Then the following holds:
\begin{enumerate}
\setlength\itemsep{0em}
    \item[(1)] For every $x\in SG$, $G(x,x)=R_{eff}(x,V_{0})$.
    \item[(2)] For every $x,y\in SG$, $G(x,y)=\frac{1}{2}(G(x,x)+G(y,y)-R_{eff}(x,y))$.
    \item[(3)] For every $x,y\in SG$ we have $G(x,x)\ge G(x,y)$.
\end{enumerate}
\end{proposition}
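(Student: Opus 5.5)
We plan to prove all three items first on the vertex sets $V_n$, where everything is a finite electrical-network statement, and then pass to all of $SG$ by density of $V_*$ and continuity of $G$. The key idea is to identify $G(x,\cdot)$ with a normalized potential. For fixed $x \in V_n$, consider the network $SG_n$ with conductances $(5/3)^n$ on each edge, so that the energy is $\mathcal{E}_n$. Let $u_x$ be the function with $u_x=0$ on $V_0$ that minimizes $\mathcal{E}_n(u,u)$ subject to $u(x)=1$. This is the voltage when $x$ is held at $1$ and $V_0$ is grounded, and it is harmonic on $V_n\setminus(V_0\cup\{x\})$. By Lemma \ref{lem:green-is-same} and $\Delta_n g_n(x,\cdot)=\delta_x$, the function $y\mapsto G(x,y)=(3/5)^n g_n(x,y)$ vanishes on $V_0$ and is discrete harmonic off $x$. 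Hence $G(x,\cdot)=G(x,x)\,u_x$ on $V_n$. Because the harmonic extension of $u_x$ to finer levels preserves the energy, the same identity holds on all of $V_*$ and therefore on $SG$.

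\textbf{Item (1).} We compute the renormalized current. The Laplacian of $G(x,\cdot)$ at $x$ in the graph $SG_n$ equals $(3/5)^n$. Multiplying by the conductance $(5/3)^n$ shows that the current leaving $x$ is exactly $1$ in the renormalized network. A unit current flowing from $x$ to $V_0$ produces the voltage $R_{eff}(x,V_0)$ at $x$. Since this voltage is $G(x,x)$, we get $G(x,x)=R_{eff}(x,V_0)$ for $x\in V_*$. For general $x$, both sides are continuous: $R_{eff}$ is continuous in the resistance metric, and $G$ is continuous by its spline series. This gives (1).

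\textbf{Item (3).} By the maximum principle, $0\le u_x\le 1$. Hence $G(x,y)=G(x,x)u_x(y)\le G(x,x)$, which is (3).

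\textbf{Item (2).} We use the standard identity for Green kernels of a network with a grounded set. Glue $V_0$ into a single node $o$ and work with potentials $G(\cdot,\cdot)$. With a source $+1$ at $x$ and $-1$ at $y$, and no net current at $o$, the potential is $\phi = G(x,\cdot)-G(y,\cdot)$. This works because both terms are zero at $o$ and their Laplacians give $\delta_x-\delta_y$. Then
\[
R_{eff}(x,y)=\phi(x)-\phi(y)=G(x,x)-G(y,x)-G(x,y)+G(y,y).
\]
Using the symmetry $G(x,y)=G(y,x)$, which follows from the symmetry of $g_n$ (a reversible walk with constant degree off $V_0$), this rearranges to (2).

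We take $R_{eff}(x,y)$ here to mean resistance in the network where $V_0$ is shorted to ground, as the statement's phrase "subject to Dirichlet boundary conditions on $V_0$" indicates. For $x,y\in V_n$, the computation is exact on $SG_n$ and consistent across levels. We then extend to all of $SG$ by continuity.

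\textbf{Main obstacle.} The most delicate step is the level-consistency. We must check that the discrete quantities computed on $SG_n$ with factor $(5/3)^n$, and the identification through Lemma \ref{lem:green-is-same}, agree with the resistance on $SG$ defined by $\mathcal{E}$. This requires that harmonic extension from $V_n$ to $V_{n+1}$ preserves the renormalized energy, which is exactly why the renormalization factor is $5/3$. Once that is in place, the continuity extension is routine, using the uniform convergence of the spline series defining $G$.
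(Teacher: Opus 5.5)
Your argument is correct, but it is not how the paper proceeds. The paper gives no computation at all. It simply invokes the general theory of Green functions on resistance forms (Theorems 4.1 and 4.3 in \cite{Kig12}), which gives all three items for an arbitrary resistance form and boundary set, with $R_{eff}(x,y)$ in (2) being the resistance of the form in which the boundary set is shorted to a point. You instead prove everything on the finite networks $SG_n$ with edge conductances $(5/3)^n$:
\begin{itemize}
\item Lemma \ref{lem:green-is-same} identifies $G(x,\cdot)|_{V_n}$ with $G(x,x)\,u_x$, where $u_x$ is the equilibrium potential.
\item Item (1) then says that the current out of $x$ equals $1$.
\item Item (3) is the maximum principle $0\le u_x\le 1$.
\item Item (2) is superposition of the two grounded unit sources, with $V_0$ merged into one node.
\end{itemize}
You then transfer to $SG$ in two steps. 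Compatibility of the renormalized energies makes the level-$n$ resistances equal to those of $\mathcal{E}$. Density of $V_*$ finishes the job, using continuity of $G$ (uniform convergence of the spline series) and of $R_{eff}(\cdot,V_0)$ and the shorted resistance, both of which are dominated by the resistance metric.

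The citation is shorter and does not use the finitely ramified structure. Applying it does require matching the spline-series kernel with Kigami's Green function, i.e.\ the standard fact that $\mathcal{E}(G(x,\cdot),v)=v(x)$ for $v$ vanishing on $V_0$. Your route is elementary and self-contained given Lemma \ref{lem:green-is-same}. It also makes explicit that $R_{eff}(x,y)$ must be the shorted resistance, which is exactly what the proof of Lemma \ref{lem:green-variation} uses when it collapses $V_0$ to a single vertex.

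There are two minor points. First, you must set aside the case $x\in V_0$, where $u_x$ is undefined. This is harmless, since then $G(x,\cdot)\equiv 0$ and $R_{eff}(x,V_0)=0$. Second, your unit-current step needs $G(x,\cdot)|_{V_n}$ to be $(3/5)^n$ times the inverse of the \emph{combinatorial} Laplacian. That is the only normalization in which Lemma \ref{lem:green-is-same} and $\Delta_n g_n(x,\cdot)=\delta_x$ hold together; the expected-visits Green function is $4$ times larger. This is indeed what the spline series gives. For a level-$1$ cut point $p$, $G(p,p)=\tfrac{9}{50}$, which equals the resistance from $p$ to $V_0$ in $SG_1$ with edge conductances $5/3$.
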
 
\begin{proof}
These follow from the general theory of Green functions on resistance forms, which is detailed in \cite[Section 4]{Kig12}. For proofs, see Theorems 4.1 and Theorem 4.3 from \cite{Kig12}.
\end{proof}

\begin{lemma}\label{lem:green-variation}
For every $x,y,z\in SG$ we have
$$|G(x,y)-G(x,z)|\le R_{eff}(y,z),$$
and consequently
$$0\le G(x,x)-G(x,y)\le R_{eff}(x,y).$$
\end{lemma}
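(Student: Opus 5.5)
Both inequalities follow from Proposition~\ref{prop:effective-res}, combined with the triangle inequality for the effective resistance metric. The approach is to rewrite differences of Green function values as differences of resistances and then bound these by resistances.

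First, the second claim. Applying part (2) of Proposition~\ref{prop:effective-res} with $y=x$ gives
\[
G(x,x)-G(x,y)=\tfrac12\bigl(G(x,x)-G(y,y)+R_{eff}(x,y)\bigr),
\]
and part (3) gives nonnegativity directly. For the upper bound, I use part (1) to write $G(x,x)-G(y,y)=R_{eff}(x,V_0)-R_{eff}(y,V_0)$. Since effective resistance to a set is $1$-Lipschitz with respect to $R_{eff}$, this is at most $R_{eff}(x,y)$. To justify the Lipschitz property, view $V_0$ as shorted to a single point, so $R_{eff}(\cdot,V_0)$ becomes a genuine resistance metric distance on the quotient network. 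Its triangle inequality then gives $R_{eff}(x,V_0)\le R_{eff}(x,y)+R_{eff}(y,V_0)$, where $R_{eff}(x,y)$ is taken in the network with $V_0$ grounded as in the proposition. Hence $G(x,x)-G(x,y)\le \tfrac12(R_{eff}(x,y)+R_{eff}(x,y))=R_{eff}(x,y)$.

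For the first claim, part (2) gives
\[
G(x,y)-G(x,z)=\tfrac12\bigl(G(y,y)-G(z,z)-R_{eff}(x,y)+R_{eff}(x,z)\bigr).
\]
The term $G(y,y)-G(z,z)=R_{eff}(y,V_0)-R_{eff}(z,V_0)$ is bounded in absolute value by $R_{eff}(y,z)$, by the Lipschitz argument above. The term $R_{eff}(x,z)-R_{eff}(x,y)$ is bounded in absolute value by $R_{eff}(y,z)$, by the triangle inequality for $R_{eff}$ as a metric. Adding the two bounds and halving gives $|G(x,y)-G(x,z)|\le R_{eff}(y,z)$.

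A cleaner alternative view of the first claim: $G(x,\cdot)$ is the potential of a unit current injected at $x$ with $V_0$ grounded, and by the standard resistance form estimate $|u(y)-u(z)|^2\le R_{eff}(y,z)\,\mathcal{E}(u,u)$, together with $\mathcal{E}(G(x,\cdot),G(x,\cdot))=G(x,x)$, one only obtains a bound of order $\sqrt{R_{eff}(y,z)G(x,x)}$. So the metric route above is the one to use.

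The main obstacle is a convention check rather than a computation. I need to confirm that the $R_{eff}(x,y)$ appearing in part (2) is the resistance in the network with $V_0$ grounded (shorted to one node), so that it is a metric and the triangle inequalities with $R_{eff}(\cdot,V_0)$ are legitimate. I would cite \cite[Section 4]{Kig12}: there the Green function is built from the resistance form on $SG$ with $V_0$ identified to a point, and the triangle inequality for this metric is standard.
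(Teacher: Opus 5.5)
Your proposal is correct and follows the paper's proof essentially verbatim. You use parts (1) and (2) of Proposition~\ref{prop:effective-res} to write $G(x,y)-G(x,z)$ as half the sum of $R_{eff}(y,V_0)-R_{eff}(z,V_0)$ and $R_{eff}(x,z)-R_{eff}(x,y)$, collapse the grounded set $V_0$ to a point, and bound each difference by $R_{eff}(y,z)$ via the triangle inequality. The only cosmetic difference is that you prove the second inequality directly instead of taking $z=x$ in the first and adding part (3). Also, the phrase ``with $y=x$'' there is unnecessary, since your displayed identity is just part (2) rearranged.
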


\begin{proof}
Using $(1)$ and $(2)$ in Proposition \ref{prop:effective-res}, we obtain
\begin{align*}
G(x,y)-G(x,z)&=\frac{1}{2}[G(y,y)-G(z,z)+R_{eff}(x,z)-R_{eff}(x,y)] \\
&=\frac{1}{2}[R_{eff}(y,V_{0})-R_{eff}(z,V_{0})+R_{eff}(x,z)-R_{eff}(x,y)].
\end{align*}
Note that $V_{0}$ is grounded and hence can be collapsed to a single vertex. Meanwhile, the resistance metric satisfies the triangle inequality, so
\begin{align*}
|G(x,y)-G(x,z)|&\le\frac{1}{2}(|R_{eff}(y,V_{0})-R_{eff}(z,V_{0})|+|R_{eff}(x,z)-R_{eff}(x,y)|) \\
&\le\frac{1}{2}(R_{eff}(y,z)+R_{eff}(y,z))=R_{eff}(y,z),
\end{align*}
proving the first inequality. This together with $(3)$ from Proposition 3 implies the second part.
\end{proof}

\begin{lemma}\label{lem:green-bounds}
There exists a constant $C>0$ such that the following hold.
\begin{enumerate}\setlength\itemsep{0em}
    \item[(1)] For every $x,y\in SG$ belonging to the same $n$-cell, we have
    $$0\le G(x,x)-G(x,y)\le C\Big(\frac{3}{5}\Big)^{n},$$
    \item[(2)] For every $x\in SG$, $n\in\mathbb{N}$ and $y\in SG$ with $d(y,V_{0})\le2^{-n}$ we have
    $$0\le G(x,y)\le C\Big(\frac{3}{5}\Big)^{n}.$$
\end{enumerate}
\end{lemma}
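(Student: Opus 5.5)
Both parts reduce to effective resistance estimates, via Lemma \ref{lem:green-variation} and Proposition \ref{prop:effective-res}. The key input is the standard scaling of resistance on $SG$: if $x,y$ lie in a common $n$-cell $\psi_w(SG)$ with $|w|=n$, then
\[
R_{eff}(x,y)\le C_0\Big(\tfrac{3}{5}\Big)^n .
\]
To see this, note that the energy renormalisation $\mathcal{E}(u\circ\psi_w^{-1})$ on an $n$-cell scales by $(5/3)^n$. Hence the resistance between any two points of a single $n$-cell is $(3/5)^n$ times the resistance between two points of $SG$. That quantity is bounded by the resistance diameter of $SG$, which is finite; it equals $\sup R_{eff}$ over $SG$ and is bounded, e.g., by a telescoping chain argument along $V_m$, each step costing at most $c(3/5)^m$. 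Grounding $V_0$ only decreases resistances, so the bound also holds for the Dirichlet version of $R_{eff}$ used in Proposition \ref{prop:effective-res}.

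For (1), let $x,y$ lie in the same $n$-cell. Lemma \ref{lem:green-variation} gives
\[
0\le G(x,x)-G(x,y)\le R_{eff}(x,y),
\]
and the scaling bound above finishes the argument with $C\ge C_0$.

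For (2), nonnegativity of $G$ follows from Proposition \ref{prop:effective-res} and the fact that $G$ is a limit of nonnegative spline sums; alternatively, $G(x,y)=(3/5)^m g_m(x,y)\ge0$ on $V_m$ by Lemma \ref{lem:green-is-same}, extended by continuity. For the upper bound, suppose $d(y,V_0)\le 2^{-n}$. Then $y$ lies in an $n$-cell containing some corner $u_i$ (a ball of radius $2^{-n}$ around $u_i$ meets only the $n$-cell at $u_i$, perhaps after adjusting $n$ by one, which changes $C$ by a factor $5/3$). Since $G(x,u_i)=0$ by the Dirichlet condition, Lemma \ref{lem:green-variation} applied with $z=u_i$ gives
\[
0\le G(x,y)=|G(x,y)-G(x,u_i)|\le R_{eff}(y,u_i)\le C_0\Big(\tfrac{3}{5}\Big)^{n-1}.
\]
Absorbing the factor $5/3$ into $C$ gives (2).

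The main obstacle is justifying the cell resistance bound rigorously, together with the small geometric point that a point within Euclidean distance $2^{-n}$ of a corner lies in the corner $(n-1)$-cell. The latter follows since the corner $(n-1)$-cell contains $SG\cap B(u_i,2^{-(n-1)}\cdot\sqrt3/2)$, which contains the ball of radius $2^{-n}$. For the former, I would cite \cite[Section 1.6]{S06}, where $R_{eff}(x,y)\le c\,|x-y|^{\log(5/3)/\log 2}$ on $SG$ is established. Since $|x-y|\le 2^{-n}$ within an $n$-cell, this gives exactly $c(3/5)^n$.
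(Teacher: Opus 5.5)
Your proof is correct and follows essentially the paper's route: part (1) is exactly Lemma \ref{lem:green-variation} combined with the cell resistance bound $R_{eff}(x,y)\le C(3/5)^n$ from \cite[Section 1.6]{S06}. For part (2), the paper bounds $G(x,y)\le G(y,y)=R_{eff}(y,V_0)$ via Proposition \ref{prop:effective-res}, whereas you apply Lemma \ref{lem:green-variation} with $z=u_i$ and $G(x,u_i)=0$; this is the same resistance-to-the-corner estimate, and your shift from $n$ to $n-1$ is harmless but unnecessary, since a point of $SG$ within Euclidean distance $2^{-n}$ of $u_i$ already lies in the corner $n$-cell at $u_i$.
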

\begin{proof}
For the first claim, using Lemma \ref{lem:green-variation} it suffices to show that there exists $C>0$ such that for every $x,y\in SG$ belonging to the same $n$-cell, we have $R_{eff}(x,y)\le C\left(\frac{3}{5}\right)^{n}$. This is the well-known resistance upper bound \cite[Section 1.6]{S06}.
For the second claim, using the third and then the first item in Proposition \ref{prop:effective-res}, we obtain $G(x,y)\le G(y,y)=R_{eff}(y,V_{0})$. Since $y$ shares the same $n$-cell with one of the corner vertices in $V_{0}$, applying the aforementioned resistance upper bound yields $R_{eff}(y,V_{0})\le C\left(\frac{3}{5}\right)^{n}$.
\end{proof}
\subsection{Sandpiles on the Sierpi\'nski gasket graphs}\label{subsection:sand}

\textbf{Boundary conditions  on $SG_n$.} 
For the abelian sandpile Markov chain on $SG_n$, we adopt the standard “normal” boundary conditions used in the literature (e.g., [DV98]): introduce a sink vertex $s \notin V_{SG_n}$ and connect it by two edges to the corner vertices of $SG_n$, so every vertex in $V_{SG_n}$ has degree $4$ (see Figure $\ref{fig:sg_normalbdry}$). Throughout, “sandpile group on $SG_n$” refers to the set of recurrent configurations of the sandpile Markov chain with this boundary, denoted $\mathcal{R}_n$, and we write $\mathsf{id}_n$ for its identity element.

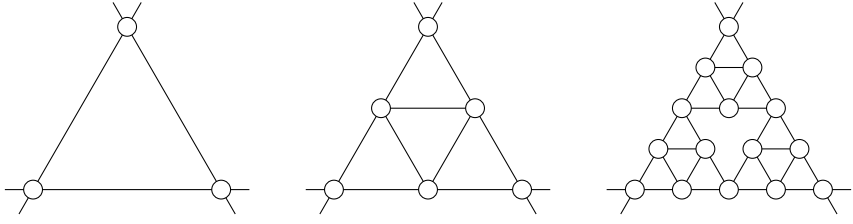
\begin{figure}
    \centering
    \begin{tikzpicture}[scale=2.5]
        \coordinate (A) at (0,0);
        \coordinate (B) at (1,0);
        \coordinate (C) at (0.5,{sqrt(3)/2});
        
        \draw (A)--(B)--(C)--cycle;
        \foreach \p/\a/\b in {
              A/180/240,
              B/0/-60,
              C/60/120
            }{
              \draw (\p) -- ++(\a:0.15);
              \draw (\p) -- ++(\b:0.15);
            }
        \foreach \p in {A,B,C}{
          \draw[fill=white] (\p) circle (0.05);
        }
                
    \begin{scope}[shift={(1.6,0)}]
        \coordinate (A) at (0,0);
        \coordinate (B) at (1,0);
        \coordinate (C) at (0.5,{sqrt(3)/2});
        \coordinate (AB) at ($ (A)!0.5!(B) $);
        \coordinate (BC) at ($ (B)!0.5!(C) $);
        \coordinate (CA) at ($ (C)!0.5!(A) $);
        
        \draw (A) -- (B) -- (C) -- cycle;
        \draw (AB) -- (BC) -- (CA) -- cycle;
        \foreach \p/\a/\b in {
          A/180/240,
          B/0/-60,
          C/60/120
        }{
          \draw (\p) -- ++(\a:0.15);
          \draw (\p) -- ++(\b:0.15);
        }
        \foreach \p in {A,B,C,AB,BC,CA}{
          \draw[fill=white] (\p) circle (0.05);
        }
    \end{scope}

    \begin{scope}[shift={(3.2,0)}]
        \coordinate (A) at (0,0);
        \coordinate (B) at (1,0);
        \coordinate (C) at (0.5,{sqrt(3)/2});
        \coordinate (D) at ($ (A)!0.5!(B) $);
        \coordinate (E) at ($ (B)!0.5!(C) $);
        \coordinate (F) at ($ (C)!0.5!(A) $);

        \coordinate (AD) at ($ (A)!0.5!(D) $);
        \coordinate (AE) at ($ (A)!0.5!(E) $);
        \coordinate (AF) at ($ (A)!0.5!(F) $);

        \coordinate (BD) at ($ (B)!0.5!(D) $);
        \coordinate (BE) at ($ (B)!0.5!(E) $);
        \coordinate (BF) at ($ (B)!0.5!(F) $);

        \coordinate (CD) at ($ (C)!0.5!(D) $);
        \coordinate (CE) at ($ (C)!0.5!(E) $);
        \coordinate (CF) at ($ (C)!0.5!(F) $);

        \draw (A) -- (B) -- (C) -- cycle;
        \draw (D) -- (E) -- (F) -- cycle;
        \draw (AD) -- (AE) -- (AF) -- cycle;
        \draw (BD) -- (BE) -- (BF) -- cycle;
        \draw (CD) -- (CE) -- (CF) -- cycle;

        \foreach \p/\a/\b in {
              A/180/240,
              B/0/-60,
              C/60/120
            }{
              \draw (\p) -- ++(\a:0.15);
              \draw (\p) -- ++(\b:0.15);
            }
        \foreach \p in {A,B,C,D,E,F, AD, AE, AF, BD, BE, BF, CD, CE, CF}{
          \draw[fill=white] (\p) circle (0.05);
        }
    \end{scope}
    \end{tikzpicture}
    \caption{The Sierpi\'nski gasket graphs $SG_0$, $SG_1$ and $SG_2$ with normal boundary conditions.}
    \label{fig:sg_normalbdry}
\end{figure}

\textbf{Sandpile identity on $SG_n$.} Define the sandpile configuration $M_1(x,y,z)$ on $SG_1$ as
\begin{center}
    \begin{tikzpicture}[scale=2.5, baseline=-0.5ex]
        \coordinate (A) at (0,0);
        \coordinate (B) at (1,0);
        \coordinate (C) at (0.5,{sqrt(3)/2});
        \coordinate (AB) at ($ (A)!0.5!(B) $);
        \coordinate (BC) at ($ (B)!0.5!(C) $);
        \coordinate (CA) at ($ (C)!0.5!(A) $);
        
        \draw (A) -- (B) -- (C) -- cycle;
        \draw (AB) -- (BC) -- (CA) -- cycle;
        
        \foreach \p in {A,B,C,AB,BC,CA}{
          \draw[fill=white] (\p) circle (0.1);
        }
        \node at (A) {$x$};
        \node at (B) {$y$};
        \node at (C) {$z$};
        \node at (AB) {$3$};
        \node at (BC) {$2$};
        \node at (CA) {$3$};
        
        \node at (1.3,0.4) {$=$};
    
    \begin{scope}[shift={(1.6,0)}]
        \coordinate (A2) at (0,0);
        \coordinate (B2) at (1,0);
        \coordinate (C2) at (0.5,{sqrt(3)/2});
        \draw (A2)--(B2)--(C2)--cycle;
        
        \foreach \p/\t in {A2/x, B2/y, C2/z}{
        \draw[fill=white] (\p) circle (0.1);
        \node at (\p) {$\t$};
        }
        
        \node at (0.5,0.4) {$M_1$};
    \end{scope}

        \node[right] at (2.8,0.4) {$=\quad M_1(x,y,z)$};
    \end{tikzpicture}
\end{center}
where the corner values $x,y,z \in \N$ are arbitrary. Iteratively define the configuration $M_{n+1}(x,y,z)$ 

\begin{center}
    \begin{tikzpicture}[scale=2.5, baseline=-0.5ex]
        \coordinate (A) at (0,0);
        \coordinate (B) at (1,0);
        \coordinate (C) at (0.5,{sqrt(3)/2});
        \coordinate (AB) at ($ (A)!0.5!(B) $);
        \coordinate (BC) at ($ (B)!0.5!(C) $);
        \coordinate (CA) at ($ (C)!0.5!(A) $);
        
        \draw (A) -- (B) -- (C) -- cycle;
        \draw (AB) -- (BC) -- (CA) -- cycle;
        
        \foreach \p in {A,B,C,AB,BC,CA}{
          \draw[fill=white] (\p) circle (0.1);
        }
        \node at (A) {$x$};
        \node at (B) {$y$};
        \node at (C) {$z$};
        \node at (AB) {$3$};
        \node at (BC) {$2$};
        \node at (CA) {$3$};
        
        \node at (0.25,0.144) {$M_n$};
        \node at (0.5,0.577) {$M_n$};
        \node at (0.75,0.144) {$M_n$};
        
        \node at (1.3,0.4) {$=$};
    
    \begin{scope}[shift={(1.6,0)}]
        \coordinate (A2) at (0,0);
        \coordinate (B2) at (1,0);
        \coordinate (C2) at (0.5,{sqrt(3)/2});
        \draw (A2)--(B2)--(C2)--cycle;
        
        \foreach \p/\t in {A2/x, B2/y, C2/z}{
        \draw[fill=white] (\p) circle (0.1);
        \node at (\p) {$\t$};
        }
        
        \node at (0.5,0.4) {$M_{n+1}$};
    \end{scope}
    
        \node[right] at (2.8,0.4) {$=\quad M_{n+1}(x,y,z)$.};
    \end{tikzpicture}
\end{center}

In \cite{KSHW24}, the authors prove in Theorem 3.2 that for any $n \geq 1$, the identity element of $SG_{n+1}$ with normal boundary conditions is given by 

\begin{center}
    \begin{tikzpicture}[scale=2.5, baseline=-0.5ex]
        \coordinate (A) at (0,0);
        \coordinate (B) at (1,0);
        \coordinate (C) at (0.5,{sqrt(3)/2});
        \coordinate (AB) at ($ (A)!0.5!(B) $);
        \coordinate (BC) at ($ (B)!0.5!(C) $);
        \coordinate (CA) at ($ (C)!0.5!(A) $);
        
        \draw (A) -- (B) -- (C) -- cycle;
        \draw (AB) -- (BC) -- (CA) -- cycle;
        
        \foreach \p in {A,B,C,AB,BC,CA}{
          \draw[fill=white] (\p) circle (0.1);
        }
        \node at (A) {$2$};
        \node at (B) {$2$};
        \node at (C) {$2$};
        \node at (AB) {$2$};
        \node at (BC) {$2$};
        \node at (CA) {$2$};
        
        \node at (0.25,0.144) {$M_n$};
        \node at (0.5,0.577) {$M_n^-$};
        \node at (0.75,0.144) {$M_n^+$};
        
        \node at (1.3,0.4) {$= \quad \mathsf{id}_{n+1}$,};
    \end{tikzpicture}
\end{center}

where $M_n^+$ (respectively $M_n^-$) denotes the sandpile configuration obtained from $M_n$ by rotating $SG_n$  counterclockwise (respectively clockwise) by $120^\circ$. The sandpile identity elements on the Sierpi\'nski gaskets $SG_n$ for $n \in \{2,3,4,5\}$ are displayed in Figure \ref{fig:identity_gasket}.

\section{Decomposition of the sandpile identity}\label{sec:decomp}

In this section, we establish a decomposition of the sandpile identity needed for proving the main theorem. For $n \in \mathbb{N}$, define $I_n: V_{SG_n} \to \mathbb{R}$ by
$$I_n(x) = (g_n * \mathsf{id}_n)(x) := \sum_{y \in V_{SG_n}} g_n(x,y)\,\mathsf{id}_n(y).$$
We have $\Delta_n I_n = \mathsf{id}_n$ on $V_{SG_n}\setminus\{u_1,u_2,u_3\}$ and $I_n(u_i)=0$ for each $i\in\{1,2,3\}$; that is, $I_n$ vanishes at the corner vertices of $SG_n$. We also define $h_n: V_{SG_n}\to\mathbb{R}$ by
\begin{align}\label{eq:h_n}
h_n(x) = \sum_{y\in V_{SG_n}} g_n(x,y).
\end{align}
It follows that $\Delta_n h_n(x)=1$ for all $x\in V_{SG_n}\setminus\{u_1,u_2,u_3\}$, and $h_n$ vanishes at the corner vertices of $SG_n$. Next, define the distance to the corners by
\begin{align}\label{eq:corner-dist}
d_n: V_{SG_n}\to\mathbb{R},\quad d_n(x)= \min\bigl\{\,d_{SG_n}(x,u_i): i\in\{1,2,3\}\,\bigr\},
\end{align}
where $d_{SG_n}$ denotes the graph distance in $SG_n$. In the next proposition, we show that $I_n$ can be expressed as a linear combination of the functions $d_n$ and $h_n$.

\begin{figure}
    \centering
    \begin{minipage}{0.24\textwidth}
        \centering
        \includegraphics[width=\linewidth]{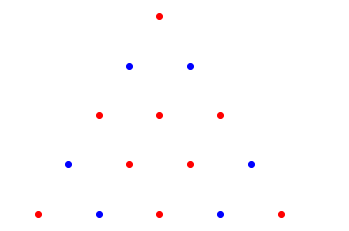}\\
        \small $\mathsf{id}_2$
    \end{minipage}
    \begin{minipage}{0.24\textwidth}
        \centering
        \includegraphics[width=\linewidth]{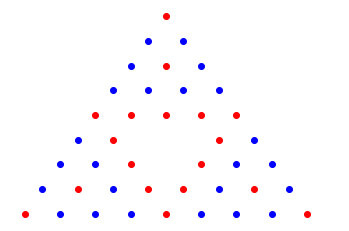}\\
        \small $\mathsf{id}_3$
    \end{minipage}
    \begin{minipage}{0.24\textwidth}
        \centering
        \includegraphics[width=\linewidth]{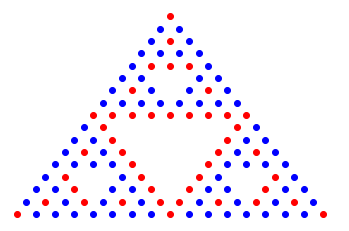}\\
        \small $\mathsf{id}_4$
    \end{minipage}
    \begin{minipage}{0.24\textwidth}
        \centering
        \includegraphics[width=\linewidth]{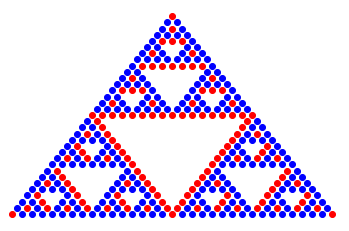}\\
        \small $\mathsf{id}_5$
    \end{minipage}
    \caption{The identity element  $\mathsf{id}_n$ on $SG_n$ for $n \in \{2,3,4,5\}$. Blue dots are vertices with 3 chips and red dots are vertices with 2 chips.}
    \label{fig:identity_gasket}
\end{figure}

\begin{proposition}\label{prop:id-decomp}
For $n \geq 2$ and $I_n=g_n*\mathsf{id}_n$, it holds that
    $$I_n=-\frac{1}{3}d_n+\frac{8}{3}h_n.$$
\end{proposition}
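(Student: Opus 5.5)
Both sides vanish at the corners $u_1,u_2,u_3$: we have $I_n(u_i)=h_n(u_i)=0$ by construction and $d_n(u_i)=0$ trivially. The reduced Laplacian $\Delta_n$ restricted to the interior with Dirichlet conditions at the corners is invertible; this is exactly the statement that $g_n$ inverts it. It therefore suffices to check that both sides have the same Laplacian at every non-corner vertex, i.e.
$$\mathsf{id}_n(x)=\Delta_n I_n(x)=-\tfrac13\,\Delta_n d_n(x)+\tfrac83 \quad\text{for all } x\in V_{SG_n}\setminus\{u_1,u_2,u_3\},$$
using $\Delta_n h_n=1$ there. Equivalently, I will show $\Delta_n d_n(x)=8-3\,\mathsf{id}_n(x)$ at interior vertices. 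Since the identity only takes the values $2$ and $3$ in the interior (the recursion only inserts $2$'s and $3$'s), this means
\begin{itemize}
\item $\Delta_n d_n(x)=2$ where $\mathsf{id}_n(x)=2$;
\item $\Delta_n d_n(x)=-1$ where $\mathsf{id}_n(x)=3$.
\end{itemize}
Every interior vertex has degree $4$, so $\Delta_n d_n(x)=4d_n(x)-\sum_{y\sim x}d_n(y)$.

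\textbf{Computing $\Delta_n d_n$ locally.} Neighbouring values of $d_n$ differ by at most $1$. Each interior vertex $x$ is a junction point of two $1$-cells (smallest triangles) and has two neighbours in each. The graph-distance structure on $SG_n$ is recursive: in a cell of level $k$ with corners $a,b,c$, the distance function to the set of corners of $SG_n$ restricted to that cell is
$$\min\{d_n(a)+d(\cdot,a),\; d_n(b)+d(\cdot,b),\; d_n(c)+d(\cdot,c)\}.$$
Within a $1$-cell $\{x,y,z\}$ the values of $d_n$ are of the form $\{k,k,k+1\}$ or $\{k,k+1,k+1\}$, never all equal, by a parity/geodesic argument. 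Hence $\Delta_n d_n(x)$ is determined by $x$'s position in each of its two $1$-cells, whether it is a strict minimum or part of a tie.
\begin{itemize}
\item If $x$ is the unique minimiser in both cells, then $\Delta_n d_n(x)=-4$.
\item The configurations that actually occur give $\Delta_n d_n(x)=2$ (e.g.\ neighbours $k-1,k-1,k,k+1$, or similar) or $-1$.
\end{itemize}
I will classify which pattern occurs at each vertex by induction on $n$, matching the recursive description of $\mathsf{id}_{n+1}$ from \cite{KSHW24}.

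\textbf{Induction.} Define, for $M_n(x,y,z)$, the analogous statement on a single cell: for the function $\delta$ = distance to the cell's corners plus offsets, the interior Laplacian equals $8-3M_n$. The $3,2,3$ pattern in $M_1$ reflects the asymmetry: the vertex labelled $2$ sits opposite the corner ($x$) from which distance effectively arrives. So I will set up the inductive claim with corner data. Take a cell whose corners carry distance values $(a,a+2^{m}\text{-type},\dots)$, i.e.\ the distance within a cell is driven from one designated corner, together with the correct orientation, and show that the interior Laplacian is given by $M_m$ in that orientation. Then glue:
\begin{itemize}
\item In $SG_{n+1}$, the three subcells inherit distance from their own outer corner. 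By symmetry this yields orientations $M_n$, $M_n^+$, $M_n^-$.
\item The three midpoints $u_{12},u_{23},u_{13}$ have $d=2^n$. Each is equidistant from two corners, and its four neighbours all have $d=2^n-1$, so $\Delta d=4$. This contradicts the target $8-3\cdot2=2$.
\end{itemize}
So I must recheck: at the midpoints the neighbours are at distance $2^n-1$ along the two edges toward each corner, and the interior ones are also at distance $2^n-1$, giving $\Delta=4$. This suggests the correct accounting. The base vertices of $M_n$ carry the value $3$ while the midpoints carry $2$, so I expect the relative orientation matters, and I would verify on $SG_2$ by direct computation before committing to the exact form of the inductive hypothesis.

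\textbf{Main obstacle.} The main obstacle is formulating the right inductive statement about the restriction of $d_n$ to subcells: which corner drives the distance, and what the values are on the cell boundary. It must be stated so that the local Laplacian pattern reproduces exactly $M_n$ with the orientations $M_n^\pm$ and the $2$'s at the level-one junctions. The verification of the junction vertices between subcells, where contributions from two differently oriented subcells meet, is where I expect the delicate case analysis. I would check $n=2,3$ explicitly against Figure \ref{fig:identity_gasket} first.
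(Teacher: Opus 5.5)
Your reduction is correct and is the same as the paper's. Both sides vanish on $V_0$, the Dirichlet problem on $V_{SG_n}\setminus V_0$ has a unique solution, and since $\Delta_n h_n=1$ everything reduces to $\Delta_n d_n = 8-3\,\mathsf{id}_n$ at every non-corner vertex. But that identity is the entire content of the proposition, and the proposal does not prove it. The inductive hypothesis is never formulated, and the one junction computation you carry out is wrong.

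At the cutpoint $p_1$ of $SG_{n+1}$ (the midpoint of $u_1u_2$), only the two neighbours on the outer side $u_1u_2$ have $d_{n+1}=2^n-1$. The neighbour on the inner side $p_1p_2$ lies on the side of $\triangle^1_{n+1}$ opposite $u_1$, so it is at distance exactly $2^n$ from $u_1$ (and $2^n+1$ from $u_2$). Likewise the neighbour on $p_1p_3$ is at distance $2^n$ from $u_2$. Hence $\Delta_{n+1} d_{n+1}(p_1)=4\cdot 2^n-\bigl(2(2^n-1)+2\cdot 2^n\bigr)=2$, which is exactly the target $8-3\cdot 2$. The ``contradiction'' you found is a miscount, not a sign that the orientations must change.

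Your local analysis has similar slips. Neighbours $k-1,k-1,k,k+1$ give $\Delta d=1$, not $2$; the value $2$ comes from $k-1,k-1,k,k$. Also, only the pattern $\{k,k+1,k+1\}$ ever occurs on a $1$-cell, and this is what forces $\Delta_n d_n\in\{2,-1\}$ at interior vertices.

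The missing idea is the one you flag as the main obstacle. The paper resolves it as follows. On $\triangle^2_{n+1}$ one has $d_{n+1}=d_{SG_{n+1}}(u_2,\cdot)$. On each of the three sub-triangles of $\triangle^2_{n+1}$ (copies of $SG_{n-1}$), this equals the distance to that sub-triangle's \emph{top} corner plus a constant: $0$ on the top one, and $2^{n-1}$ on the two lower ones, which are driven from $c_2'$ and $c_2''$. Moreover $\mathsf{id}_{n+1}|_{\triangle^2_{n+1}}=M_n^-$ consists of three copies of $M_{n-1}^-$ in the same orientation. So each sub-triangle looks exactly like $\triangle^2_n\subset SG_n$: same identity pattern, and same $d$ up to an additive constant the Laplacian does not see. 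The induction hypothesis on $SG_n$ therefore covers every vertex whose neighbours stay within one sub-triangle. What remains is:
\begin{itemize}
\item a direct computation at the internal junctions $c_2',c_2'',m$, giving $-1,-1,2$ and matching the values $3,3,2$ of $M_n^-$ there;
\item a direct computation at the global cutpoints $p_1,p_2,p_3$, giving $2$;
\item rotational symmetry for $\triangle^1_{n+1}$ and $\triangle^3_{n+1}$, and the base case $n=2$ by hand.
\end{itemize}

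You could instead finish along your own $1$-cell route. By the same recursion, $d_n$ on every $1$-cell equals $k$ at one designated vertex and $k+1$ at the other two. So at an interior vertex, $\Delta_n d_n=2$ if it is non-designated in both of its $1$-cells, and $-1$ otherwise. It cannot be designated in both, since it has a neighbour closer to $V_0$. You would then still need an induction on the $M_n$-recursion showing that the doubly non-designated vertices are exactly the $2$'s of $\mathsf{id}_n$. As written, neither argument is carried out.
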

\begin{proof}
We write $F_n:=-\frac{1}{3}d_n+\frac{8}{3}h_n$ and we first show that
$$\Delta_n F_n(x)=\mathsf{id}_n(x) \quad \text{for all} \quad
x\in V_{SG_n}\backslash\{u_1,u_2,u_3\}.$$
Since the functions $d_n$ and $h_n$ are rotationally symmetric, it 
suffices to consider $\Delta_n F_n$ only on the top triangle $\bigtriangleup_n^2$. Special attention must be paid to the cutpoints whose neighborhoods intersect more than one of the three subtriangles, since the Laplacian at a vertex depends on the values of the function on its neighboring vertices. We write
$\triangle_n^2$ for the vertices of $SG_n$ in the upper triangle $\triangle_n^2=\psi_2(V_{SG_{n-1}}\backslash\{u_1,u_2,u_3\})$ and similarly
$$\triangle_n^1=\psi_1\bigl(V_{SG_{n-1}}\backslash\{u_1,u_2,u_3\}\bigr),\quad \triangle_n^3=\psi_3\bigl(V_{SG_{n-1}}\backslash\{u_1,u_2,u_3\}\bigr).$$
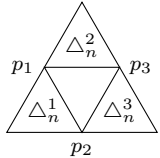
\begin{figure}[h]
\centering
    \begin{tikzpicture}[scale=2, font=\scriptsize]
        \coordinate (A) at (0,0);
        \coordinate (B) at (1,0);
        \coordinate (C) at (0.5,{sqrt(3)/2});
        \coordinate (AB) at ($ (A)!0.5!(B) $);
        \coordinate (BC) at ($ (B)!0.5!(C) $);
        \coordinate (CA) at ($ (C)!0.5!(A) $);
        
        \draw (A) -- (B) -- (C) -- cycle;
        \draw (AB) -- (BC) -- (CA) -- cycle;
    
        \node at (0.25,0.144) {$\bigtriangleup_n^1$};
        \node at (0.5,0.577) {$\bigtriangleup_n^2$};
        \node at (0.75,0.144) {$\bigtriangleup_n^3$};
    
        \node[below] at (AB) {$p_2$};
        \node[right] at (BC) {$p_3$};
        \node[left] at (CA) {$p_1$};
\end{tikzpicture}
\caption{Illustration of the cutpoints $p_1,p_2,p_3$ and the subtriangles $\triangle_n^i$ for $i\in\{1,2,3\}$ in $SG_n$.}
\label{fig:labels-for-proof}
\end{figure}
We write  $p_1,p_2$ and $p_3$ for the three cutpoints, so
$p_1=\frac{1}{2}u_2$, $p_2=\frac{1}{2}u_3$, $p_3=p_1+p_2$; see 
Figure \ref{fig:labels-for-proof}.
Below, for readability reasons, we suppress the subscript $n$ of the Laplacians in the calculations.
We prove the statement by induction on the iteration of the gasket graphs; let $n=2$. 

The graph distance from the top vertex $u_2$ and its Laplacian are given by
    \begin{center}
    \begin{tikzpicture}[scale=1.75, baseline=-0.5ex]
        \node[left] at (-0.3,0.4) {$d_2(x) \vert_{\bigtriangleup_2^2\cup\{p_1,p_3\}}=$};

        \coordinate (A) at (0,0);
        \coordinate (B) at (1,0);
        \coordinate (C) at (0.5,{sqrt(3)/2});
        \coordinate (AB) at ($ (A)!0.5!(B) $);
        \coordinate (BC) at ($ (B)!0.5!(C) $);
        \coordinate (CA) at ($ (C)!0.5!(A) $);
        
        \draw (A) -- (B) -- (C) -- cycle;
        \draw (AB) -- (BC) -- (CA) -- cycle;

        \filldraw[black] (C) circle (0.03);
        
        \node[above] at (C) {};
        \node[right] at (BC) {1};
        \node[left] at (CA) {1};
        \node[below] at (A) {2};
        \node[below] at (B) {2};
        \node[below] at (AB) {2};
    \end{tikzpicture}
\hspace{0.9cm}
    \begin{tikzpicture}[scale=1.75, baseline=-0.5ex]
        \node[left] at (-0.3,0.4) {$\Delta d_2(x) \vert_{\bigtriangleup_2^2}=$};

        \coordinate (A) at (0,0);
        \coordinate (B) at (1,0);
        \coordinate (C) at (0.5,{sqrt(3)/2});
        \coordinate (AB) at ($ (A)!0.5!(B) $);
        \coordinate (BC) at ($ (B)!0.5!(C) $);
        \coordinate (CA) at ($ (C)!0.5!(A) $);
        
        \draw (A) -- (B) -- (C) -- cycle;
        \draw (AB) -- (BC) -- (CA) -- cycle;

        \filldraw[black] (C) circle (0.03);
        
        \node[right] at (BC) {-1};
        \node[left] at (CA) {-1};
        \node[below] at (AB) {2};

        \draw[fill=white] (A) circle (0.03);
        \draw[fill=white] (B) circle (0.03);
    \end{tikzpicture}
    \end{center}
and the white circles placed at the cutpoints indicate values that we compute later. To emphasize that we work on $\bigtriangleup_2^2$, we have colored the top vertex $u_2$ black.
Since the Laplacian of $h_2$ is $1$ in all non-corner vertices, the Laplacian of $F_2$ on $\bigtriangleup_2^2$ can be computed as 

\[
\begin{aligned}
\Delta F_2 \vert_{\bigtriangleup_2^2}
&= -\tfrac{1}{3}\Delta d_2\vert_{\bigtriangleup_2^2}
   + \tfrac{8}{3}\Delta h_2\vert_{\bigtriangleup_2^2} \\
&=
\begin{tikzpicture}[scale=1.75, baseline={(0,0.5)}]
        \coordinate (A) at (0,0);
        \coordinate (B) at (1,0);
        \coordinate (C) at (0.5,{sqrt(3)/2});
        
        \coordinate (AB) at ($ (A)!0.5!(B) $);
        \coordinate (BC) at ($ (B)!0.5!(C) $);
        \coordinate (CA) at ($ (C)!0.5!(A) $);
        
        \draw (A) -- (B) -- (C) -- cycle;
        \draw (AB) -- (BC) -- (CA) -- cycle;

        \filldraw[black] (C) circle (0.03);
        
        \draw[fill=white] (A) circle (0.03);
        \draw[fill=white] (B) circle (0.03);

        \node[above] at (C) {};
        \node[below] at (AB) {$-2/3$};
        \node[right] at (BC) {$1/3$};
        \node[left] at (CA) {$1/3$};
\end{tikzpicture}
\; + \;
\begin{tikzpicture}[scale=1.75, baseline={(0,0.5)}]
        \coordinate (A) at (0,0);
        \coordinate (B) at (1,0);
        \coordinate (C) at (0.5,{sqrt(3)/2});
        
        \coordinate (AB) at ($ (A)!0.5!(B) $);
        \coordinate (BC) at ($ (B)!0.5!(C) $);
        \coordinate (CA) at ($ (C)!0.5!(A) $);
        
        \draw (A) -- (B) -- (C) -- cycle;
        \draw (AB) -- (BC) -- (CA) -- cycle;

        \filldraw[black] (C) circle (0.03);
        
        \draw[fill=white] (A) circle (0.03);
        \draw[fill=white] (B) circle (0.03);

        \node[below] at (AB) {$8/3$};
        \node[right] at (BC) {$8/3$};
        \node[left] at (CA) {$8/3$};
\end{tikzpicture}
\; = \;
\begin{tikzpicture}[scale=1.75, baseline={(0,0.5)}]
        \coordinate (A) at (0,0);
        \coordinate (B) at (1,0);
        \coordinate (C) at (0.5,{sqrt(3)/2});
        
        \coordinate (AB) at ($ (A)!0.5!(B) $);
        \coordinate (BC) at ($ (B)!0.5!(C) $);
        \coordinate (CA) at ($ (C)!0.5!(A) $);
        
        \draw (A) -- (B) -- (C) -- cycle;
        \draw (AB) -- (BC) -- (CA) -- cycle;

        \filldraw[black] (C) circle (0.03);
        
        \draw[fill=white] (A) circle (0.03);
        \draw[fill=white] (B) circle (0.03);
        \node[above] at (C) {};
        \node[below] at (AB) {$2$};
        \node[right] at (BC) {$3$};
        \node[left] at (CA) {$3$};
\end{tikzpicture}
 \, = \quad M_1^-.
\end{aligned}
\]

For the values at the cutpoints we have
\[
\begin{aligned}
\Delta F_2 \vert_{\{p_1,p_2,p_3\}} & = -\frac{1}{3}\Delta d_2\vert_{\{p_1,p_2,p_3\}} + \frac{8}{3}\Delta h_2\vert_{\{p_1,p_2,p_3\}} \\
           & =
\begin{tikzpicture}[scale=2, font=\small, baseline={(0,0.5)}]
        \coordinate (A) at (0,0);
        \coordinate (B) at (1,0);
        \coordinate (C) at (0.5,{sqrt(3)/2});
        \coordinate (AB) at ($ (A)!0.5!(B) $);
        \coordinate (BC) at ($ (B)!0.5!(C) $);
        \coordinate (CA) at ($ (C)!0.5!(A) $);
        
        \draw (A) -- (B) -- (C) -- cycle;
        \draw (AB) -- (BC) -- (CA) -- cycle;
    
        \node at (0.25,0.144) {$\bigtriangleup_2^1$};
        \node at (0.5,0.577) {$\bigtriangleup_2^2$};
        \node at (0.75,0.144) {$\bigtriangleup_2^3$};

        \node[above] at (C) {};
        \node[below] at (AB) {$-2/3$};
        \node[right] at (BC) {$-2/3$};
        \node[left] at (CA) {$-2/3$};
\end{tikzpicture}
\; + \;
\begin{tikzpicture}[scale=2, font=\small, baseline={(0,0.5)}]
        \coordinate (A) at (0,0);
        \coordinate (B) at (1,0);
        \coordinate (C) at (0.5,{sqrt(3)/2});
        \coordinate (AB) at ($ (A)!0.5!(B) $);
        \coordinate (BC) at ($ (B)!0.5!(C) $);
        \coordinate (CA) at ($ (C)!0.5!(A) $);
        
        \draw (A) -- (B) -- (C) -- cycle;
        \draw (AB) -- (BC) -- (CA) -- cycle;
    
        \node at (0.25,0.144) {$\bigtriangleup_2^1$};
        \node at (0.5,0.577) {$\bigtriangleup_2^2$};
        \node at (0.75,0.144) {$\bigtriangleup_2^3$};
    
        \node[below] at (AB) {$8/3$};
        \node[right] at (BC) {$8/3$};
        \node[left] at (CA) {$8/3$};
\end{tikzpicture}
\; = \;
\begin{tikzpicture}[scale=2, font=\small, baseline={(0,0.5)}]
        \coordinate (A) at (0,0);
        \coordinate (B) at (1,0);
        \coordinate (C) at (0.5,{sqrt(3)/2});
        \coordinate (AB) at ($ (A)!0.5!(B) $);
        \coordinate (BC) at ($ (B)!0.5!(C) $);
        \coordinate (CA) at ($ (C)!0.5!(A) $);
        
        \draw (A) -- (B) -- (C) -- cycle;
        \draw (AB) -- (BC) -- (CA) -- cycle;
    
        \node at (0.25,0.144) {$\bigtriangleup_2^1$};
        \node at (0.5,0.577) {$\bigtriangleup_2^2$};
        \node at (0.75,0.144) {$\bigtriangleup_2^3$};
    
        \node[below] at (AB) {$2$};
        \node[right] at (BC) {$2$};
        \node[left] at (CA) {$2$};
\end{tikzpicture}
\end{aligned}
\]
This proves the statement for $n=2$.

Next we assume that $\Delta F_n \vert _{SG_n\backslash\{u_1,u_2,u_3\}}=\mathsf{id}_n \vert_{SG_n\backslash\{u_1,u_2,u_3\}}$ holds $n \in \N$, and we show that the statement holds for $n+1$.
We restrict again our attention to the upper subtriangle $\triangle_{n+1}^2$. Within $\triangle_{n+1}^2$, the distance function $d_{n+1}$ coincides with the graph distance to the top corner vertex $u_2$. Thus, on $\triangle_{n+1}^2$, the distance from the top corner can be depicted as in Figure \ref{fig:graph_distance}.

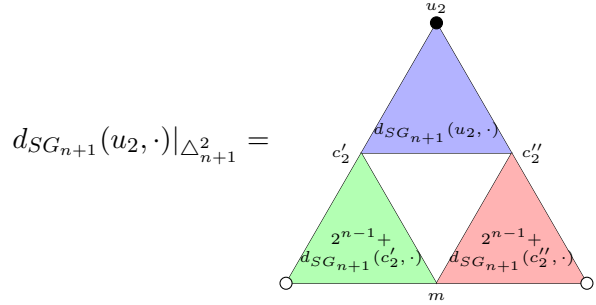
\begin{figure}[h]
    \begin{center}
    $d_{SG_{n+1}}(u_2, \cdot)\vert_{\bigtriangleup_{n+1}^2} = $
        \begin{tikzpicture}[scale=4, font = \tiny, baseline={(0,1.8)}]
            \coordinate (A) at (0,0);
            \coordinate (B) at (1,0);
            \coordinate (C) at (0.5,{sqrt(3)/2});
            
            \coordinate (AB) at ($ (A)!0.5!(B) $);
            \coordinate (BC) at ($ (B)!0.5!(C) $);
            \coordinate (CA) at ($ (C)!0.5!(A) $);
            
            \draw (A) -- (B) -- (C) -- cycle;
            \draw (AB) -- (BC) -- (CA) -- cycle;
            \fill[green!30] (A) -- (AB) -- (CA) -- cycle;
            \fill[red!30]   (B) -- (BC) -- (AB) -- cycle;
            \fill[blue!30]  (C) -- (CA) -- (BC) -- cycle;
            \draw[fill=white] (A) circle (0.02);    
            \draw[fill=white] (B) circle (0.02);
            \filldraw[black] (C) circle (0.02);
        
            \node[above] at (C) {$u_2$};
            \node[right] at (BC) {$c_2''$};
            \node[left] at (CA) {$c_2'$};
            \node[below] at (AB) {$m$};
            \node at (0.25,0.11)[align=center] {$2^{n-1} +$\\ $d_{SG_{n+1}}(c_2', \cdot)$};
            \node at (0.5,0.5) {$d_{SG_{n+1}}(u_2, \cdot)$};
            \node at (0.75,0.11)[align=center] {$2^{n-1} +$\\ $d_{SG_{n+1}}(c_2'', \cdot)$};
        \end{tikzpicture}
    \end{center}
    \caption{The values of the graph distance function in the subtriangle $\bigtriangleup_{n+1}^2$. }
    \label{fig:graph_distance}
\end{figure}
Since the Laplacian at $x$ depends solely on the function’s values at the neighbors of $x$, for all vertices in the blue triangle of Figure \ref{fig:graph_distance} that have no neighbors in the red or green triangles, the claim follows directly from the induction hypothesis.
Next, consider vertices lying in the green or red triangles that have no neighbors outside their respective triangles. Since within both the green and red regions the distance is uniformly shifted by the constant $2^{n-1}$, the claim again follows from the induction hypothesis. Therefore, for any vertex $x$ in the red, green, or blue triangle whose neighbors all lie in the same subtriangle as $x$, we have
$\Delta F_{n+1}(x)=\mathsf{id}_{n+1}(x)$.
Now we examine the values of the Laplacian at the cutpoints $c_2',c_2''$ and $m$ in $\triangle_{n+1}^2$ as shown in Figure \ref{fig:graph_distance}. For $c_2'$ we have
\begin{align*}
    \Delta d_{SG_{n+1}}(u_2,c_2') &=\sum _{y \sim c_2'}d_{SG_{n+1}}(u_2,c_2') - d_{SG_{n+1}}(u_2,y)\\
    &=4 \cdot 2^{n-1} - ((2^{n-1} - 1) + 2^{n-1} + 2 \cdot (2^{n-1}+1))
    = -1.
\end{align*}
By symmetry, for $c_2''$ it holds that $\Delta d_{SG_{n+1}}(u_2,c_2'')=-1$, and for $m$ similar computations lead to
\begin{equation*}
    \Delta d_{SG_{n+1}}(u_2,m)=4 \cdot 2^n - (2 \cdot (2^n - 1) + 2 \cdot 2^n) =2.
\end{equation*}
Hence, on $\triangle_{n+1}^2$ we have $\Delta F_{n+1}|_{\triangle_{n+1}^2}=\mathsf{id}_{n+1}|_{\triangle_{n+1}^2}$, and by 
rotational symmetry, the same holds on $\triangle_{n+1}^1$ and $\triangle_{n+1}^3$.
It remains to address the cutpoints $p_1, p_2, p_3$ of $SG_{n+1}$; see Figure \ref{fig:labels-for-proof} for the cutpoints of $SG_n$. By symmetry, the values of $\Delta d_{n+1}(\cdot)$ at these vertices agree and are equal to
\begin{align*}
    \Delta d_{n+1} (p_1) &= \sum_{v \sim p_1} d_{n+1}(p_1) - d_{n+1}(v)
    = 4 \cdot 2^n - (2 \cdot 2^n + 2 \cdot (2^n -1))
    = 2.
\end{align*}
This completes the argument, yielding
$\Delta_n F_n(x) = \mathsf{id}_n(x)$
for all $n \in \mathbb{N}$ and all $x \in V_{SG_n}\setminus\{u_1,u_2,u_3\}$. Define
$T_n = F_n - I_n$.
Then $\Delta_n T_n(x) = 0$ for all $x \in V_{SG_n}\setminus\{u_1,u_2,u_3\}$ and $T_n(u_i)=0$ for each $i\in\{1,2,3\}$. By the maximum	
(–minimum) principle for harmonic functions, $T_n(x)=0$ for all $x\in V_{SG_n}$, hence $I_n = F_n$, completing the proof.
\end{proof}

\section{Scaling limit of the sandpile identity}\label{sec:scaling}
We prove the three parts of Theorem \ref{thm:main} separately, beginning with \ref{I2}. This follows immediately from the decomposition of the sandpile identity established in Proposition \ref{prop:id-decomp}.

Let $d_{SG}(x,y)$ denote the path distance between points $x,y\in SG$, which can be obtained from the graph distance on $SG_n$ via
$$d_{SG}(x,y)=\lim_{n\rightarrow\infty}\frac{1}{2^n}d_{SG_n}(x_n,y_n),$$
where $x_n$ and $y_n$ are the vertices in $SG_n$ closest to $x$ and $y$ respectively. Write
$$\mathsf{d}(x)=\min\{\,d_{SG}(x,u_1),\, d_{SG}(x,u_2),\, d_{SG}(x,u_3)\,\}.$$

\begin{proposition}\label{prop:I2}
It holds that
    $$\lim_{n\rightarrow\infty}\frac{1}{2^n}\Bigl(g_n*\mathsf{id}_n-\frac{8}{3}h_n\Bigr)^\blacktriangle(x)=-\frac{1}{3}\mathsf{d}(x).$$
\end{proposition}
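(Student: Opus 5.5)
By Proposition \ref{prop:id-decomp}, for every $n\ge 2$ we have the exact identity $g_n*\mathsf{id}_n-\frac{8}{3}h_n=-\frac{1}{3}d_n$ on $V_{SG_n}$. Taking piecewise constant continuations, which is a linear operation, the left-hand side of the claim is therefore exactly
$$\frac{1}{2^n}\Bigl(g_n*\mathsf{id}_n-\frac{8}{3}h_n\Bigr)^\blacktriangle(x)=-\frac{1}{3}\,\frac{d_n(x_n)}{2^n},$$
where $x_n$ is the vertex of $SG_n$ closest to $x$. The whole statement thus reduces to showing that $2^{-n}d_n(x_n)\to\mathsf{d}(x)$ for every $x\in SG$.

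For this I will write $d_n(x_n)=\min_i d_{SG_n}(x_n,u_i)$. Since the corners $u_i$ lie in every $V_n$, we have $(u_i)_n=u_i$. The defining limit of $d_{SG}$ stated just before the proposition then gives
$$2^{-n}d_{SG_n}(x_n,u_i)\to d_{SG}(x,u_i)\qquad\text{for each } i\in\{1,2,3\}.$$
A minimum of three convergent sequences converges to the minimum of the limits, so $2^{-n}d_n(x_n)\to\mathsf{d}(x)$. Multiplying by $-\frac13$ concludes the proof.

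The only point needing care is that the convergence $2^{-n}d_{SG_n}(x_n,y_n)\to d_{SG}(x,y)$ is taken for granted from the preceding paragraph of the paper. If one wants to justify it, I would argue as follows. The sequence $2^{-n}d_{SG_n}(x_n,u_i)$ is essentially monotone: $SG_n$ embeds isometrically up to a factor $2$ into $SG_{n+1}$, because each edge becomes a path of length $2$ and geodesics can be taken along these subdivided edges. Moreover $x_n$ and $x_{n+1}$ lie in a common $n$-cell, so $d_{SG_{n+1}}(x_n,x_{n+1})\le 2$. Consequently consecutive terms differ by $O(2^{-n})$, and the sequence is Cauchy. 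Apart from this routine justification, there is no real obstacle: the proposition is an immediate corollary of the decomposition in Proposition \ref{prop:id-decomp}.
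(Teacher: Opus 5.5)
Your proposal is correct and follows the paper's proof: you apply Proposition~\ref{prop:id-decomp} to reduce the claim to $2^{-n}d_n(x_n)\to\mathsf{d}(x)$, and then read this off from the limit defining $d_{SG}$, using $(u_i)_n=u_i$ and continuity of the minimum. Your Cauchy-sequence justification of that limit, via $d_{SG_{n+1}}=2\,d_{SG_n}$ on $V_n$ and $d_{SG_{n+1}}(x_n,x_{n+1})\le 2$, fills in a step that the paper simply takes for granted.
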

\begin{proof}
In view of Proposition \ref{prop:id-decomp}
$$g_n*\mathsf{id}_n(x)-\frac{8}{3}h_n(x)=-\frac{1}{3}d_n(x),$$
for all $x \in V_{SG_n}$, from which the claim follows by using
 $$\lim_{n\rightarrow\infty}\frac{1}{2^n}\Bigl(g_n*\mathsf{id}_n-\frac{8}{3}h_n\Bigr)^\blacktriangle(x)=\lim_{n\rightarrow\infty}-\frac{1}{3\cdot2^n}d_n^\blacktriangle(x)=-\frac{1}{3}\mathsf{d}(x).$$
\end{proof}

We now address statement \ref{I3} of Theorem \ref{thm:main}. The first step is to establish that the functions $h_n$ provide a good approximation to the integral of the Green function $G$ over the Sierpiński gasket. 

\begin{lemma}\label{lem:green-conv}
There exists a constant $c_{*}>0$ such that for all $x\in V_{*}$, it holds that
$$ \limsup_{n\rightarrow\infty}\bigg|3\cdot5^{n}\int_{SG}G(x,y)d\mu(y)-2\sum_{y\in V_{n}}g_{n}(x,y)\bigg|\le c_{*}. $$
\end{lemma}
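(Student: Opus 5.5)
We fix $x\in V_*$, so $x\in V_N$ for some $N$, and consider only $n\ge N$, so that $x\in V_n$. The plan is to rewrite $\sum_{y\in V_n}g_n(x,y)$ through Lemma \ref{lem:green-is-same} as $(5/3)^n\sum_{y\in V_n}G(x,y)$. The claim then says that the Riemann-type sum $\frac{2}{3^{n}}\sum_{y\in V_n}G(x,y)$ approximates $3\int_{SG}G(x,y)\,d\mu(y)$ with error $O(5^{-n})$. Multiplying by $5^n$ turns this into the bounded quantity in the statement, since $3\cdot 5^n\int G\,d\mu - 2(5/3)^n\sum_{y\in V_n} G(x,y)=5^n\bigl(3\int G\,d\mu - 2\cdot 3^{-n}\sum_{y\in V_n}G(x,y)\bigr)$. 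The factor $2/3^{n}$ is natural: each non-corner vertex of $V_n$ lies in exactly two $n$-cells, each of measure $3^{-n}$. So $\frac{2}{3}\cdot 3^{-n}$ should be half the measure of the ball $B(y,2^{-n})$, which is two $n$-cells, giving measure $2\cdot 3^{-n}$. I would therefore compare, cell by cell, the integral of $G(x,\cdot)$ over each $n$-cell with the average of its values at the three corners of that cell.

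The key device is the function $\tilde G:=\sum_{z\in V_n}G(x,z)\Psi^n_z$, the level-$n$ harmonic spline interpolating $G(x,\cdot)$ on $V_n$. By the computation in the proof of Lemma \ref{lem:mean-value} (each spline integrates to $3^{-(n+1)}$ over an $n$-cell), $\int_{SG}\tilde G\,d\mu=\sum_{\text{cells }C}3^{-(n+1)}\sum_{z\in V_n\cap C}G(x,z)$. Each non-corner vertex belongs to two cells, and corners contribute $G(x,u_i)=0$. Hence $\int\tilde G\,d\mu=\frac{2}{3}\cdot 3^{-n}\sum_{y\in V_n}G(x,y)$, so the discrete sum equals exactly $3\int\tilde G\,d\mu$. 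It then remains to show $\bigl|\int_{SG}\bigl(G(x,\cdot)-\tilde G\bigr)\,d\mu\bigr|\le c_*5^{-n}$.

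On each $n$-cell $C$, the difference $G(x,\cdot)-\tilde G$ vanishes on the three corners of $C$. On every cell not containing $x$ in its interior, $y\mapsto G(x,y)$ satisfies $\Delta_y G(x,y)=-1$ in the weak sense, i.e.\ $-\delta_x$ with the standard normalization. Because $x\in V_n$, the point $x$ lies only on cell boundaries, so the relation holds on every cell. Thus on each cell, $G(x,\cdot)-\tilde G$ is the solution of the Dirichlet problem with Laplacian $-1$ (a point mass located at a boundary vertex does not enter the cell interior). By self-similarity, the Dirichlet problem on an $n$-cell with $\Delta u=-1$ and zero boundary values has the solution $u=(1/5)^n\cdot 3^{-n}\cdot$ (hmm—scaling: $\Delta(u\circ\psi_w^{-1})$ scales by $5^n$). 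Its integral over the cell is therefore $(5^{-n})\cdot 3^{-n}\cdot\int_{SG}G_0\,d\mu$, where $G_0=\int G(\cdot,y)\,d\mu(y)$ is a fixed constant. Summing over the $3^n$ cells gives exactly $5^{-n}\int\!\!\int G\,d\mu\,d\mu$, which even yields the limit rather than just the bound.

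The main obstacle is justifying this cellwise decomposition rigorously, namely that $G(x,\cdot)$ restricted to a cell equals harmonic interpolation plus the rescaled Dirichlet torsion function, despite the point singularity at $x$. If the weak-Laplacian argument at the boundary point $x$ is delicate, I would fall back on a cruder estimate. Cells not containing $x$ are handled as above. For the at most four cells touching $x$, I would use Lemma \ref{lem:green-bounds}(1), which gives a variation of order $(3/5)^n$ over a set of measure $3^{-n}$ and hence contributes $O(5^{-n})$. This crude bound is still sufficient for the claimed $\limsup$ estimate.
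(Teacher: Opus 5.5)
You correctly obtain $2\cdot 3^{-n}\sum_{y\in V_n}G(x,y)=3\int_{SG}\tilde G\,d\mu$ for the level-$n$ spline interpolant $\tilde G$. This is a different route from the paper's. The paper partitions $SG$ into the balls $B(z,2^{-(n+1)})$, $z\in V_n$, and applies the mean value property (Lemma \ref{lem:mean-value}) at every $z\neq x$. It then bounds the two leftover pieces (the ball around $x$ and the three corner balls) separately via Lemma \ref{lem:green-bounds}.

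The next step is wrong. You claim that on each $n$-cell $y\mapsto G(x,y)$ has Laplacian $-1$, so that $G(x,\cdot)-\tilde G$ is a rescaled torsion function with total integral $5^{-n}\iint G\,d\mu\,d\mu$. The function with Laplacian $-1$ is $y\mapsto\int_{SG}G(y,y')\,d\mu(y')$, not $G(x,\cdot)$. The Green function satisfies $\Delta_y G(x,\cdot)=-\delta_x$, so it is harmonic on every $n$-cell once $x\in V_n$.

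You can read this off the spline series defining $G$. For $m\ge n$ and $z\in V_{m+1}\setminus V_m$, we have $x\in V_n\subseteq V_{m+1}$ and $z\notin V_m\supseteq V_n$, so $z\neq x$ and $\Psi^{m+1}_z(x)=0$. Only the terms with $m+1\le n$ survive, so $G(x,\cdot)$ is itself a level-$n$ harmonic spline. Hence $G(x,\cdot)-\tilde G$ is harmonic on each cell with zero boundary values, i.e.\ $G(x,\cdot)\equiv\tilde G$. The quantity inside the absolute value in the lemma is therefore exactly $0$ for all $n\ge N$. Your asserted limit $3\iint G\,d\mu\,d\mu$ is false, and the bound you derive comes from computing the wrong object.

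The fallback does not repair this, because it still treats the cells away from $x$ ``as above''. Those cells cannot be handled by the crude estimate. An $O(5^{-n})$ error per cell, summed over $3^n$ cells, gives $O(3^n)$ after multiplying by $3\cdot 5^n$, so an exact cancellation there is indispensable. Your worry about the point singularity at $x$ is also unfounded. $G(x,\cdot)$ is continuous, and the cells having $x$ as a corner are harmonic-spline cells like all the others.

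Once corrected, your argument is shorter and sharper than the paper's. It proves $3\cdot5^n\int_{SG}G(x,y)\,d\mu(y)=2\sum_{y\in V_n}g_n(x,y)$ exactly for $n\ge N$. In the paper's proof, this means the two residual terms it bounds by $\tfrac{9}{5}C$ actually cancel each other.
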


\begin{proof}
First note that if $x\in V_{0}$, then $G(x,y)=0$ for all $y\in SG$, and $g_{n}(x,y)=0$ for all $n\in\mathbb{N}$ and all $y\in V_{n}$, making the lemma vacuously true.

Therefore we fix $x\in V_{*}\backslash V_{0}$. By definition, $x\in V_{n}$ for all sufficiently large $n\in\mathbb{N}$. Also, given $n\in\mathbb{N}$ we may partition $SG$ into balls of diameter $2^{-(n+1)}$ in the Euclidean distance centered at each $z\in V_{n}$, $SG=\bigcup_{z\in V_{n}}B(z,2^{-(n+1)})$. If $z\in V_{n}\backslash V_{0}$, then the closed ball $B(z,2^{-(n+1)})$ consists of two $(n+1)$-cells adjoined at $z$, and if $z\in V_{0}$, then $B(z,2^{-(n+1)})$ is the single $(n+1)$-cell containing the corner vertex $z$. As a result, under the standard self-similar probability measure $\mu$, the mass assigned to each ball is
\begin{align}\label{eq:volume}
\mu\big(B(z,2^{-(n+1)})\big) = \begin{cases} 2\cdot 3^{-(n+1)}, & \text{if } z\in V_{n}\backslash V_{0} \\ 3^{-(n+1)}, & \text{if } z\in V_{0} \end{cases}
\end{align}

Note that while two different balls may intersect, the set of their intersection has $\mu$-measure 0. We can thus express the integral of the Green's function as the sum of contributions from each ball of the said partition
\begin{align*}
\int_{SG}G(x,y)d\mu(y)&=\sum_{z\in V_{n}}\int_{B(z,2^{-(n+1)})}G(x,y)d\mu(y) \\
&=\sum_{z\in V_{n}\backslash V_{0}}\int_{B(z,2^{-(n+1)})}G(x,y)d\mu(y)+\sum_{z\in V_{0}}\int_{B(z,2^{-(n+1)})}G(x,y)d\mu(y) \\
&=:I_{bulk}+I_{boundary}.
\end{align*}
To analyze $I_{bulk}$, by the harmonic spline construction of the Green's function, the function $G(x,\cdot)$ is harmonic on $B(z,2^{-(n+1)})$ as long as $z\ne x$ and $z\notin V_{0}$. Applying Lemma \ref{lem:mean-value}, we get
\begin{align}\label{eq:green-bulk}
\frac{1}{\mu(B(z,2^{-(n+1)}))}\int_{B(z,2^{-(n+1)})}G(x,y)d\mu(y)=G(x,z) \quad \text{for every } z\in V_{n}\backslash V_{0},~z\ne x.
\end{align}
which together with Equations (\ref{eq:volume}) and (\ref{eq:green-bulk}) yields
\begin{align*}
I_{bulk}&=2\cdot3^{-(n+1)}\sum_{z\in V_{n}\backslash V_{0},z\neq x}G(x,z)+\int_{B(x,2^{-(n+1)})}G(x,y)d\mu(y) \\
&=2\cdot3^{-(n+1)}\sum_{z\in V_{n}}G(x,z)-2\cdot3^{-(n+1)}G(x,x)+\int_{B(x,2^{-(n+1)})}G(x,y)d\mu(y) \\
&=\frac{2}{3}\cdot\frac{1}{5^{n}}\sum_{z\in V_{n}}g_{n}(x,z)+\int_{B(x,2^{-(n+1)})}\big(G(x,y)-G(x,x)\big)d\mu(y).
\end{align*}
Above, we used the fact that $G(x,z)=0$ if one of $x$ and $z$ belongs to $V_{0}$, as well as Lemma \ref{lem:green-is-same} relating $G$ to the discrete Green function.
Turning now to the main expression in the Lemma, we can write
\begin{equation}\label{eq:main-expression}
\begin{aligned}
3\cdot5^{n}&\int_{SG}G(x,y)d\mu(y)-2\sum_{y\in V_{n}}g_{n}(x,y)=3\cdot5^{n}(I_{bulk}+I_{boundary})-2\sum_{y\in V_{n}}g_{n}(x,y)\\
&=3\cdot5^{n}\int_{B(x,2^{-(n+1)})}\big(G(x,y)-G(x,x)\big)d\mu(y)+3\cdot5^{n}\sum_{z\in V_{0}}\int_{B(z,2^{-(n+1)})}G(x,y)d\mu(y).
\end{aligned}
\end{equation}
We claim that this expression is uniformly bounded in $n\in\mathbb{N}$ and in $x\in V_{*}$ by a universal constant. To prove this, we need two results concerning the Green's function, each of which is used to bound each of the two terms in (\ref{eq:main-expression}).
The first result bounds the variation of the Green function on the balls $B(x,2^{-(n+1})$ for all $x\in V_*\backslash V_0$, $n\in\N$ and $y\in B(x,2^{-(n+1)})$ and this is part $(1)$ in Lemma \ref{lem:green-bounds},  showing that the first term in Equation (\ref{eq:main-expression}) is negative.
The second needed result bounds the decay of the Green function near the boundary $V_0$ and this is part $(2)$ of Lemma \ref{lem:green-bounds}, showing that the second term in (\ref{eq:main-expression}) is positive. Therefore the absolute value of \eqref{eq:main-expression} is bounded above by
\begin{align*}
&\max\bigg\{3\cdot5^{n}\int_{B(x,2^{-(n+1)})}\big(G(x,x)-G(x,y)\big)d\mu(y),~3\cdot5^{n}\sum_{z\in V_{0}}\int_{B(z,2^{-(n+1)})}G(x,y)d\mu(y)\bigg\} \\
&\le \max\bigg\{3\cdot5^{n}\cdot2\cdot3^{-(n+1)}\cdot C\left(\frac{3}{5}\right)^{n+1},~3\cdot5^{n}\cdot3\cdot3^{-(n+1)}\cdot C\left(\frac{3}{5}\right)^{n+1}\bigg\} \\
&=\max\bigg\{\frac{6}{5}C,~\frac{9}{5}C\bigg\}=\frac{9}{5}C,
\end{align*}
thereby proving the above claim and hence the Lemma.
\end{proof}

We now prove \ref{I3} in Theorem \ref{thm:main}.
\begin{proposition}\label{prop:I3}
For all $x\in V_*$ it holds that
 $$\lim_{n\rightarrow\infty}\frac{1}{2^n}\biggl(g_n*\mathsf{id}_n(x)-4\cdot5^n\int_{SG}G(x,y)d\mu(y)\biggr)=-\frac{1}{3}\mathsf{d}(x).$$
\end{proposition}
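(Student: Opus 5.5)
For fixed $x\in V_*$ the plan is to combine the exact decomposition of Proposition~\ref{prop:id-decomp} with the approximation of Lemma~\ref{lem:green-conv}. The first-order term $\frac{8}{3}h_n(x)$ will then cancel against $4\cdot 5^n\int_{SG}G(x,y)\,d\mu(y)$ up to an error that stays bounded in $n$. That error disappears after dividing by $2^n$, and only the distance term survives.

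First, since $x\in V_*$, there is $m$ with $x\in V_m\subset V_n$ for all $n\ge m$, so $x_n=x$ and no piecewise constant extension is needed. For $n\ge\max\{m,2\}$, Proposition~\ref{prop:id-decomp} together with \eqref{eq:h_n} gives
\begin{align*}
g_n*\mathsf{id}_n(x)-4\cdot5^n\int_{SG}G(x,y)\,d\mu(y)
=-\frac13 d_n(x)-\frac43\Bigl(3\cdot5^n\int_{SG}G(x,y)\,d\mu(y)-2\sum_{y\in V_n}g_n(x,y)\Bigr).
\end{align*}
By Lemma~\ref{lem:green-conv}, the bracket has $\limsup$ of absolute value at most $c_*$. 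It is therefore bounded for all large $n$, and after multiplying by $\frac{1}{2^n}$ this contribution tends to $0$. (If $x\in V_0$, both sides vanish and the claim is trivial.)

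It remains to show $2^{-n}d_n(x)\to\mathsf{d}(x)$. Because $x_n=x$ and $(u_i)_n=u_i$, the definition of $d_{SG}$ gives $d_{SG}(x,u_i)=\lim_n 2^{-n}d_{SG_n}(x,u_i)$. Taking the minimum over $i\in\{1,2,3\}$, which commutes with the limit for finitely many convergent sequences, yields $2^{-n}d_n(x)\to\mathsf{d}(x)$. Alternatively, one can use the exact scaling $d_{SG_{n+1}}(a,b)=2\,d_{SG_n}(a,b)$ for $a,b\in V_n$: a geodesic in $SG_{n+1}$ between vertices of $V_n$ can be taken to traverse $n$-cells along their sides, each side of length $2$. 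This shows the sequence $2^{-n}d_n(x)$ is eventually constant.

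The main (mild) obstacle is this last identification. It requires either justifying the limit formula for $d_{SG}$ at points of $V_*$ or proving the doubling property of graph distances between old vertices, for instance by induction on the cell structure, using that $SG_{n+1}$ restricted to each $1$-cell is a copy of $SG_n$ and that cells meet only at cut points. Everything else is a direct combination of earlier results.
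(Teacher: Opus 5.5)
Your proof is correct and follows the paper's argument: use Proposition~\ref{prop:id-decomp} to split the expression into $-\frac13 d_n(x)$ plus $\frac43$ times the bracket controlled by Lemma~\ref{lem:green-conv}, which vanishes after dividing by $2^n$, and then use $2^{-n}d_n(x)\to\mathsf{d}(x)$. The paper only asserts this last limit, whereas you justify it (via $x_n=x$ for $n\ge m$ in the definition of $d_{SG}$, or via the exact doubling of graph distances between vertices of $V_n$), which is a useful addition.
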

\begin{proof}
Let $x\in V_*$, then by Proposition \ref{prop:id-decomp}
\begin{align*}
\frac{1}{2^n}\biggl(g_n*\mathsf{id}_n(x)- & 4\cdot5^n\int_{SG}G(x,y)d\mu(y)\biggr)\\
&=-\frac{1}{3\cdot 2^n}d_n(x)+\frac{4}{3\cdot2^n}\bigg(2h_n(x)-3\cdot 5^n\int_{SG}G(x,y)d\mu(y)\bigg).
    \end{align*}
Lemma \ref{lem:green-conv} gives
    $$\frac{4}{3\cdot 2^n}\bigg|2h_n(x)-3\cdot 5^n\int_{SG}G(x,y)d\mu(y)\bigg|\leq c_{*}\frac{4}{3\cdot 2^n}\xrightarrow{n\rightarrow\infty}0,$$
which together with $\lim_{n}2^{-n}{d_{SG_n}(x)}=\mathsf{d}(x)$ yield the claim.
\end{proof}

Using our previous result, we can now directly prove \ref{I1} in Theorem \ref{thm:main} by using the already established \ref{I3}.

\begin{proposition}\label{prop:I1}
    The function $g_n*\mathsf{id}_n$ grows asymptotically as $5^n$, and for all $x\in SG$ it holds that
        \begin{align*}
            \lim_{n\rightarrow\infty}\frac{1}{5^n}(g_n*\mathsf{id}_n)^{\blacktriangle}(x)=4\int_{SG}G(x,y)d\mu(y).
        \end{align*}
\end{proposition}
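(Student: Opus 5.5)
We prove the statement for all $x\in SG$ via the piecewise constant continuation, rather than only for $x\in V_*$. By definition, $(g_n*\mathsf{id}_n)^\blacktriangle(x)=I_n(x_n)$, where $x_n\in V_n$ is the vertex closest to $x$. By Proposition \ref{prop:id-decomp},
\begin{align*}
\frac{1}{5^n}I_n(x_n)=-\frac{1}{3\cdot 5^n}d_n(x_n)+\frac{8}{3\cdot 5^n}h_n(x_n).
\end{align*}
The first term is bounded by $2^n/(3\cdot 5^n)\to 0$, since $d_n\le 2^n$ on $SG_n$. It therefore suffices to show
\begin{align*}
\frac{2}{5^n}h_n(x_n)\longrightarrow 3\int_{SG}G(x,y)\,d\mu(y),
\end{align*}
because $\frac{8}{3}\cdot\frac{3}{2}=4$. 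This also gives the growth order $5^n$ whenever the integral is nonzero, i.e.\ for $x\notin V_0$.

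We apply Lemma \ref{lem:green-conv} at the point $x_n$, which lies in $V_n$ and hence in $V_*$. That lemma is stated as a $\limsup$ for fixed $x$, but its proof gives more: the bound $\frac{9}{5}C$ holds for every $n$ and every $x\in V_n$, uniformly in both. The only inputs are the mean value property of Lemma \ref{lem:mean-value} for $z\in V_n\setminus V_0$ with $z\neq x$, the identity $G=(3/5)^n g_n$ on $V_n$ from Lemma \ref{lem:green-is-same}, and the $n$-uniform bounds of Lemma \ref{lem:green-bounds}. So we first restate it in this uniform form, checking that Lemma \ref{lem:mean-value} applies at level $n$ to points of $V_n$. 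This restatement is the one delicate point, since the mean value lemma is phrased with $m>n$ and requires $G(x_n,\cdot)$ to be harmonic away from $V_n$. That harmonicity follows from the harmonic spline construction of $G$ together with Lemma \ref{lem:green-is-same}. The uniform version yields
\begin{align*}
\Big|\frac{2}{5^n}h_n(x_n)-3\int_{SG}G(x_n,y)\,d\mu(y)\Big|\le \frac{9C}{5\cdot 5^n}\to 0.
\end{align*}

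It remains to show that $\int_{SG}G(x_n,y)\,d\mu(y)\to\int_{SG}G(x,y)\,d\mu(y)$. Since $x$ and $x_n$ lie in a common $(n-1)$-cell, Lemma \ref{lem:green-variation} and the resistance upper bound from the proof of Lemma \ref{lem:green-bounds} give
\begin{align*}
|G(x_n,y)-G(x,y)|=|G(y,x_n)-G(y,x)|\le R_{eff}(x_n,x)\le C(3/5)^{n-1}
\end{align*}
uniformly in $y$, where we use the symmetry of $G$. Integrating against the probability measure $\mu$ gives the convergence and finishes the proof.
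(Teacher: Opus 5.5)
Your proof is correct and is essentially the paper's argument. The paper splits the error into $D_n(x)$, controlled via Proposition~\ref{prop:I3} (that is, Proposition~\ref{prop:id-decomp} together with Lemma~\ref{lem:green-conv} evaluated at $x_n$), and $D_n'(x)$, controlled by $|G(x_n,y)-G(x,y)|\le R_{eff}(x_n,x)$ from Lemma~\ref{lem:green-variation}; your write-up is slightly more careful, since it makes explicit that the bound of Lemma~\ref{lem:green-conv} must hold uniformly over $x\in V_n$ (which the paper uses only implicitly, because $x_n$ moves with $n$) and it replaces dominated convergence by a bound uniform in $y$.
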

\begin{proof}
Recall for every $n\in\N$ the definition of the piecewise-constant continuatione of $u^\blacktriangle:SG\rightarrow\R,u^\blacktriangle(x)=u(x_n)$, where $x_n\in V_n$ is the closest point to $x\in SG$ in the $n$-th iteration. Let us introduce the shorthand notation $\beta_{n}:SG\rightarrow\mathbb{R}$ given by
$$ \beta_{n}(x):=\frac{1}{5^{n}}(g_{n}*id_{n})^{\blacktriangle}(x)-4\int_{SG}G(x,y)d\mu(y),$$
which can be rewritten as
\begin{align*}
\beta_{n}(x)&=\left(\frac{1}{5^{n}}(g_{n}*id_{n})^{\blacktriangle}(x)-4\int_{SG}(G(\cdot,y))^{\blacktriangle}(x)d\mu(y)\right)+4\int_{SG}\big((G(\cdot,y))^{\blacktriangle}(x)-G(x,y)\big)d\mu(y) \\
&=:D_{n}(x)+D_{n}^{\prime}(x).
\end{align*}
Using Proposition \ref{prop:I3} and $\sup_{x\in SG}\mathsf{d}(x)=\frac{1}{2}$ we get
\begin{align*}
\limsup_{n\rightarrow\infty}|D_{n}(x)|&\le \limsup_{n\rightarrow\infty}\frac{1}{2^{n}}\left|(g_{n}*id_{n})^{\blacktriangle}(x)-4\cdot5^{n}\int_{SG}(G(\cdot,y))^{\blacktriangle}(x)d\mu(y)\right|\cdot \limsup_{n\rightarrow\infty}\frac{2^{n}}{5^{n}} = 0.
\end{align*}
On the other hand, by Lemma \ref{lem:green-variation}, for every $x\in SG$ we have
$$ \big|(G(\cdot,y))^{\blacktriangle}(x)-G(x,y)\big|=|G(x_{n},y)-G(x,y)|\le R_{eff}(x_{n},x)\xrightarrow{n\rightarrow\infty}0,$$
which together with the dominated convergence theorem yields $\lim_{n\rightarrow\infty}D_{n}^{\prime}(x)=0$. Hence it holds $\lim_{n\rightarrow\infty}\beta_{n}(x)=0$ and this completes the proof.
\end{proof}

\begin{remark}\normalfont
In this work, we examined the sequence of identity elements in the sandpile group on the Sierpiński gasket approximation graphs and showed that, after smoothing via convolution with the Green operator, the identity admits both first- and second-order scaling limits. The first-order limit is the integral of the Green function on the Sierpiński gasket, while the second-order limit is the distance to the gasket’s corner vertices.
An intriguing question is whether the emergence of the distance function in the Sierpiński gasket case is incidental and fails for other graph sequences, or whether a decomposition analogous to Proposition \ref{prop:id-decomp} also exists more broadly. Identifying such a structure for sandpile identities on more general graphs would allow to analyze scaling limits beyond the Sierpiński gasket, across a wider range of state spaces.

\end{remark}

\textbf{Acknowledgments.} 
The authors are very grateful to the anonymous referee for an exceptionally thorough and constructive review. We specifically acknowledge the referee’s direct mathematical contributions to this revision, including correcting Lemma 4.1, suggesting a cleaner proof of the limit-distance identity, and formulating technical lemmas that substantially strengthened the fractal analysis.

\textbf{Funding information.} The research of E. Sava-Huss and J. Überbacher was funded by Austrian Science Fund (FWF) 10.55776/PAT3123425. For open access purposes, the authors have applied a CC BY public copyright license to any author-accepted manuscript version arising from this submission.

\bibliography{literature}

@article {FHKS24,
    AUTHOR = {Freiberg, Uta and Heizmann, Nico and Kaiser, Robin and
              Sava-Huss, Ecaterina},
     TITLE = {Internal aggregation models with multiple sources and obstacle
              problems on {S}ierpi\'nski gaskets},
   JOURNAL = {J. Fractal Geom.},
  FJOURNAL = {Journal of Fractal Geometry. Mathematics of Fractals and
              Related Topics},
    VOLUME = {11},
      YEAR = {2024},
    NUMBER = {1-2},
     PAGES = {111--160},
      ISSN = {2308-1309,2308-1317},
   MRCLASS = {31E05 (28A80 31C20 35R35 60J10 60J45)},
  MRNUMBER = {4751210},
MRREVIEWER = {Peter\ R.\ Massopust},
       DOI = {10.4171/jfg/141},
       URL = {https://doi-org.tum-eaccess.de/10.4171/jfg/141},
}

@book {K01,
    AUTHOR = {Kigami, Jun},
     TITLE = {Analysis on fractals},
    SERIES = {Cambridge Tracts in Mathematics},
    VOLUME = {143},
 PUBLISHER = {Cambridge University Press, Cambridge},
      YEAR = {2001},
     PAGES = {viii+226},
      ISBN = {0-521-79321-1},
   MRCLASS = {28A80 (31C20 31C25 35J05 35K05)},
  MRNUMBER = {1840042},
MRREVIEWER = {Volker\ Metz},
       DOI = {10.1017/CBO9780511470943},
       URL = {https://doi-org.tum-eaccess.de/10.1017/CBO9780511470943},
}

@book {S06,
    AUTHOR = {Strichartz, Robert S.},
     TITLE = {Differential equations on fractals},
      NOTE = {A tutorial},
 PUBLISHER = {Princeton University Press, Princeton, NJ},
      YEAR = {2006},
     PAGES = {xvi+169},
      ISBN = {978-0-691-12731-6; 0-691-12731-X},
   MRCLASS = {35-02 (28A80 35J05)},
  MRNUMBER = {2246975},
MRREVIEWER = {Peter\ R.\ Massopust},
}

@article {JLP19,
    AUTHOR = {Jerison, Daniel C. and Levine, Lionel and Pike, John},
     TITLE = {Mixing time and eigenvalues of the abelian sandpile {M}arkov
              chain},
   JOURNAL = {Trans. Amer. Math. Soc.},
  FJOURNAL = {Transactions of the American Mathematical Society},
    VOLUME = {372},
      YEAR = {2019},
    NUMBER = {12},
     PAGES = {8307--8345},
      ISSN = {0002-9947,1088-6850},
   MRCLASS = {60J10 (05C50 82C20)},
  MRNUMBER = {4029698},
MRREVIEWER = {Arvind\ Ayyer},
       DOI = {10.1090/tran/7585},
       URL = {https://doi-org.tum-eaccess.de/10.1090/tran/7585},
}

@article {HJL19,
    AUTHOR = {Hough, Robert D. and Jerison, Daniel C. and Levine, Lionel},
     TITLE = {Sandpiles on the square lattice},
   JOURNAL = {Comm. Math. Phys.},
  FJOURNAL = {Communications in Mathematical Physics},
    VOLUME = {367},
      YEAR = {2019},
    NUMBER = {1},
     PAGES = {33--87},
      ISSN = {0010-3616,1432-0916},
   MRCLASS = {82C22 (60K35)},
  MRNUMBER = {3933404},
       DOI = {10.1007/s00220-019-03408-5},
       URL = {https://doi-org.tum-eaccess.de/10.1007/s00220-019-03408-5},
}

@article {BHJ17,
    AUTHOR = {Bhupatiraju, Sandeep and Hanson, Jack and J\'arai, Antal A.},
     TITLE = {Inequalities for critical exponents in {$d$}-dimensional
              sandpiles},
   JOURNAL = {Electron. J. Probab.},
  FJOURNAL = {Electronic Journal of Probability},
    VOLUME = {22},
      YEAR = {2017},
     PAGES = {Paper No. 85, 51},
      ISSN = {1083-6489},
   MRCLASS = {60K35 (82C22)},
  MRNUMBER = {3718713},
MRREVIEWER = {Xin\ Sun},
       DOI = {10.1214/17-EJP111},
       URL = {https://doi-org.tum-eaccess.de/10.1214/17-EJP111},
}

@article {JW14,
    AUTHOR = {J\'arai, Antal A. and Werning, Nicol\'as},
     TITLE = {Minimal configurations and sandpile measures},
   JOURNAL = {J. Theoret. Probab.},
  FJOURNAL = {Journal of Theoretical Probability},
    VOLUME = {27},
      YEAR = {2014},
    NUMBER = {1},
     PAGES = {153--167},
      ISSN = {0894-9840,1572-9230},
   MRCLASS = {60K35 (82C20)},
  MRNUMBER = {3174221},
MRREVIEWER = {Oriane\ Blondel},
       DOI = {10.1007/s10959-012-0446-z},
       URL = {https://doi-org.tum-eaccess.de/10.1007/s10959-012-0446-z},
}

@article {AJ04,
    AUTHOR = {Athreya, Siva R. and J\'arai, Antal A.},
     TITLE = {Infinite volume limit for the stationary distribution of
              abelian sandpile models},
   JOURNAL = {Comm. Math. Phys.},
  FJOURNAL = {Communications in Mathematical Physics},
    VOLUME = {249},
      YEAR = {2004},
    NUMBER = {1},
     PAGES = {197--213},
      ISSN = {0010-3616,1432-0916},
   MRCLASS = {82C22 (60K35 82B31)},
  MRNUMBER = {2077255},
MRREVIEWER = {Ellen\ Saada},
       DOI = {10.1007/s00220-004-1080-0},
       URL = {https://doi-org.tum-eaccess.de/10.1007/s00220-004-1080-0},
}

@article {PS13,
    AUTHOR = {Pegden, Wesley and Smart, Charles K.},
     TITLE = {Convergence of the {A}belian sandpile},
   JOURNAL = {Duke Math. J.},
  FJOURNAL = {Duke Mathematical Journal},
    VOLUME = {162},
      YEAR = {2013},
    NUMBER = {4},
     PAGES = {627--642},
      ISSN = {0012-7094,1547-7398},
   MRCLASS = {60K35 (35D40 35J60 35R35 60J60)},
  MRNUMBER = {3039676},
MRREVIEWER = {Antal\ A.\ J\'arai},
       DOI = {10.1215/00127094-2079677},
       URL = {https://doi-org.tum-eaccess.de/10.1215/00127094-2079677},
}

@article {LPS16,
    AUTHOR = {Levine, Lionel and Pegden, Wesley and Smart, Charles K.},
     TITLE = {Apollonian structure in the {A}belian sandpile},
   JOURNAL = {Geom. Funct. Anal.},
  FJOURNAL = {Geometric and Functional Analysis},
    VOLUME = {26},
      YEAR = {2016},
    NUMBER = {1},
     PAGES = {306--336},
      ISSN = {1016-443X,1420-8970},
   MRCLASS = {60K35 (28A80 35D40 35R02 35R35)},
  MRNUMBER = {3494492},
MRREVIEWER = {Max\ Fathi},
       DOI = {10.1007/s00039-016-0358-7},
       URL = {https://doi-org.tum-eaccess.de/10.1007/s00039-016-0358-7},
}

@article {KSHW24,
    AUTHOR = {Kaiser, Robin and Sava-Huss, Ecaterina and Wang, Yuwen},
     TITLE = {Abelian sandpiles on {S}ierpi\'nski gasket graphs},
   JOURNAL = {Electron. J. Combin.},
  FJOURNAL = {Electronic Journal of Combinatorics},
    VOLUME = {31},
      YEAR = {2024},
    NUMBER = {1},
     PAGES = {Paper No. 1.6, 23},
      ISSN = {1077-8926},
   MRCLASS = {05C81 (20K01 31C20 60J10)},
  MRNUMBER = {4695551},
MRREVIEWER = {David\ J.\ Aldous},
       DOI = {10.37236/11520},
       URL = {https://doi.org/10.37236/11520},
}

@article {J18,
    AUTHOR = {J\'arai, Antal A.},
     TITLE = {Sandpile models},
   JOURNAL = {Probab. Surv.},
  FJOURNAL = {Probability Surveys},
    VOLUME = {15},
      YEAR = {2018},
     PAGES = {243--306},
      ISSN = {1549-5787},
   MRCLASS = {60K35 (82C24)},
  MRNUMBER = {3857602},
MRREVIEWER = {Daniel\ Boivin},
       DOI = {10.1214/14-PS228},
       URL = {https://doi.org/10.1214/14-PS228},
}

@article {BP88,
    AUTHOR = {Barlow, Martin T. and Perkins, Edwin A.},
     TITLE = {Brownian motion on the {S}ierpi\'nski gasket},
   JOURNAL = {Probab. Theory Related Fields},
  FJOURNAL = {Probability Theory and Related Fields},
    VOLUME = {79},
      YEAR = {1988},
    NUMBER = {4},
     PAGES = {543--623},
      ISSN = {0178-8051,1432-2064},
   MRCLASS = {60J60 (60J25 60J65)},
  MRNUMBER = {966175},
MRREVIEWER = {F.\ B.\ Knight},
       DOI = {10.1007/BF00318785},
       URL = {https://doi.org/10.1007/BF00318785},
}

@article {BTW88,
    AUTHOR = {Bak, Per and Tang, Chao and Wiesenfeld, Kurt},
     TITLE = {Self-organized criticality},
   JOURNAL = {Phys. Rev. A (3)},
  FJOURNAL = {Physical Review. A. Third Series},
    VOLUME = {38},
      YEAR = {1988},
    NUMBER = {1},
     PAGES = {364--374},
      ISSN = {1050-2947,1094-1622},
   MRCLASS = {58F13 (82A25 92A05)},
  MRNUMBER = {949160},
       DOI = {10.1103/PhysRevA.38.364},
       URL = {https://doi.org/10.1103/PhysRevA.38.364},
}

@article{BTW87,
  title = {Self-organized criticality: An explanation of the 1/f noise},
  author = {Bak, Per and Tang, Chao and Wiesenfeld, Kurt},
  journal = {Phys. Rev. Lett.},
  volume = {59},
  issue = {4},
  pages = {381--384},
  numpages = {0},
  year = {1987},
  month = {Jul},
  publisher = {American Physical Society},
  doi = {10.1103/PhysRevLett.59.381},
  url = {https://link.aps.org/doi/10.1103/PhysRevLett.59.381}
}

@article {D90,
    AUTHOR = {Dhar, Deepak},
     TITLE = {Self-organized critical state of sandpile automaton models},
   JOURNAL = {Phys. Rev. Lett.},
  FJOURNAL = {Physical Review Letters},
    VOLUME = {64},
      YEAR = {1990},
    NUMBER = {14},
     PAGES = {1613--1616},
      ISSN = {0031-9007},
   MRCLASS = {82A68 (82A60)},
  MRNUMBER = {1044086},
       DOI = {10.1103/PhysRevLett.64.1613},
       URL = {https://doi.org/10.1103/PhysRevLett.64.1613},
}

@article {DRSV95,
    AUTHOR = {Dhar, D. and Ruelle, P. and Sen, S. and Verma, D.-N.},
     TITLE = {Algebraic aspects of abelian sandpile models},
   JOURNAL = {J. Phys. A},
  FJOURNAL = {Journal of Physics. A. Mathematical and General},
    VOLUME = {28},
      YEAR = {1995},
    NUMBER = {4},
     PAGES = {805--831},
      ISSN = {0305-4470,1751-8121},
   MRCLASS = {82B20 (20K01 82B03)},
  MRNUMBER = {1326322},
MRREVIEWER = {Anatoliy\ Yu.\ Zakharov},
       URL = {http://stacks.iop.org/0305-4470/28/805},
}

@article {CKF20,
    AUTHOR = {Chen, Joe P. and Kudler-Flam, Jonah},
     TITLE = {Laplacian growth and sandpiles on the {S}ierpi\'nski gasket:
              limit shape universality and exact solutions},
   JOURNAL = {Ann. Inst. Henri Poincar\'e{} D},
  FJOURNAL = {Annales de l'Institut Henri Poincar\'e{} D. Combinatorics,
              Physics and their Interactions},
    VOLUME = {7},
      YEAR = {2020},
    NUMBER = {4},
     PAGES = {585--664},
      ISSN = {2308-5827,2308-5835},
   MRCLASS = {82C24 (05C20 05C81 28A80 31C45 37B15 60K05)},
  MRNUMBER = {4182776},
MRREVIEWER = {Manuel\ Mor\'an},
       DOI = {10.4171/aihpd/95},
       URL = {https://doi.org/10.4171/aihpd/95},
}

@incollection{KSH26,
  author    = {Robin Kaiser and Ecaterina Sava-Huss},
  title     = {Scaling limit of the sandpile identity element on the {S}ierpinski gasket},
  booktitle = {From Classical Analysis to Analysis on Fractals: A Tribute to Robert Strichartz},
  series    = {Applied and Numerical Harmonic Analysis},
  volume    = {2},
  publisher = {Birkh\"auser},
  year      = {2026},
  url={https://arxiv.org/abs/2308.12183}, 
}

@incollection {LBR02,
    AUTHOR = {Le Borgne, Yvan and Rossin, Dominique},
     TITLE = {On the identity of the sandpile group},
      NOTE = {LaCIM 2000 Conference on Combinatorics, Computer Science and
              Applications (Montreal, QC)},
   JOURNAL = {Discrete Math.},
  FJOURNAL = {Discrete Mathematics},
    VOLUME = {256},
      YEAR = {2002},
    NUMBER = {3},
     PAGES = {775--790},
      ISSN = {0012-365X,1872-681X},
   MRCLASS = {82C20 (37B15 60G18 68Q80)},
  MRNUMBER = {1935788},
MRREVIEWER = {E.\ J.\ Janse van Rensburg},
       DOI = {10.1016/S0012-365X(02)00347-3},
       URL = {https://doi.org/10.1016/S0012-365X(02)00347-3},
}

@article {FLP10,
    AUTHOR = {Fey, Anne and Levine, Lionel and Peres, Yuval},
     TITLE = {Growth rates and explosions in sandpiles},
   JOURNAL = {J. Stat. Phys.},
  FJOURNAL = {Journal of Statistical Physics},
    VOLUME = {138},
      YEAR = {2010},
    NUMBER = {1-3},
     PAGES = {143--159},
      ISSN = {0022-4715,1572-9613},
   MRCLASS = {82C20 (60K35 82C43)},
  MRNUMBER = {2594895},
MRREVIEWER = {Aernout\ C. D. van Enter},
       DOI = {10.1007/s10955-009-9899-6},
       URL = {https://doi.org/10.1007/s10955-009-9899-6},
}

@article {JL07,
    AUTHOR = {J\'arai, A. A. and Lyons, R.},
     TITLE = {Ladder sandpiles},
   JOURNAL = {Markov Process. Related Fields},
  FJOURNAL = {Markov Processes and Related Fields},
    VOLUME = {13},
      YEAR = {2007},
    NUMBER = {3},
     PAGES = {493--518},
      ISSN = {1024-2953},
   MRCLASS = {82C20 (60C05 60K35)},
  MRNUMBER = {2357385},
}

@article {ENP23,
    AUTHOR = {Eckmann, Jean-Pierre and Nagnibeda, Tatiana and Perriard,
              Aymeric},
     TITLE = {Abelian sandpiles on cylinders},
   JOURNAL = {J. Phys. A},
  FJOURNAL = {Journal of Physics. A. Mathematical and Theoretical},
    VOLUME = {56},
      YEAR = {2023},
    NUMBER = {17},
     PAGES = {Paper No. 175001, 11},
      ISSN = {1751-8113,1751-8121},
   MRCLASS = {82C20 (60K35)},
  MRNUMBER = {4571436},
       DOI = {10.1088/1751-8121/acc435},
       URL = {https://doi.org/10.1088/1751-8121/acc435},
}

@article {M22,
    AUTHOR = {Melchionna, Andrew},
     TITLE = {The sandpile identity element on an ellipse},
   JOURNAL = {Discrete Contin. Dyn. Syst.},
  FJOURNAL = {Discrete and Continuous Dynamical Systems},
    VOLUME = {42},
      YEAR = {2022},
    NUMBER = {8},
     PAGES = {3709--3732},
      ISSN = {1078-0947,1553-5231},
   MRCLASS = {35Q70 (05C57 31C20 37B15)},
  MRNUMBER = {4447555},
       DOI = {10.3934/dcds.2022029},
       URL = {https://doi.org/10.3934/dcds.2022029},
}

@article {Kig12,
    AUTHOR = {Kigami, Jun},
     TITLE = {Resistance forms, quasisymmetric maps and heat kernel
              estimates},
   JOURNAL = {Mem. Amer. Math. Soc.},
  FJOURNAL = {Memoirs of the American Mathematical Society},
    VOLUME = {216},
      YEAR = {2012},
    NUMBER = {1015},
     PAGES = {vi+132},
      ISSN = {0065-9266,1947-6221},
      ISBN = {978-0-8218-5299-6},
   MRCLASS = {30L10 (28A80 31C25 35K05 35K08 60J45)},
  MRNUMBER = {2919892},
MRREVIEWER = {Leonid\ V.\ Kovalev},
       DOI = {10.1090/S0065-9266-2011-00632-5},
       URL = {https://doi-org.tum-eaccess.de/10.1090/S0065-9266-2011-00632-5},
}
\bibliographystyle{alpha}

\textsc{Robin Kaiser}, Departement of Mathematics, CIT, Technische Universität München, Boltzmannstr. 3, D-85748 Garching bei München, Germany. \texttt{ro.kaiser@tum.de}

\textsc{Ecaterina Sava-Huss}, Universität Innsbruck, Institut für Mathematik, Technikerstraße 13,
A-6020 Innsbruck, Austria. \texttt{Ecaterina.Sava-Huss@uibk.ac.at}

\textsc{Julia Überbacher}, Universität Innsbruck, Institut für Mathematik, Technikerstraße 13,
A-6020 Innsbruck, Austria. \texttt{Julia.Ueberbacher@uibk.ac.at}
\end{document}